\newtheorem{theorem}{Theorem}[section]
\newtheorem{lemma}[theorem]{Lemma}
\newtheorem{corollary}[theorem]{Corollary}
\theoremstyle{definition}
\newtheorem{definition}[theorem]{Definition}
\theoremstyle{remark}
\newcounter{smalllist}
\DeclareMathOperator{\diam}{diam}
\DeclareMathOperator*{\dist}{dist}
\DeclareMathOperator*{\sgn}{sgn}
\numberwithin{equation}{section}
\newcommand{\lb}{\label}
\newcommand{\beq}{\begin{equation}}
\newcommand{\eeq}{\end{equation}}
\newcommand{\bal}{\begin{align}}
\newcommand{\eal}{\end{align}}
\newcommand{\bals}{\begin{align*}}
\newcommand{\eals}{\end{align*}}
\newcommand{\bbN}{{\mathbb{N}}}
\newcommand{\bbR}{{\mathbb{R}}}
\newcommand{\bbP}{{\mathbb{P}}}
\newcommand{\bbZ}{{\mathbb{Z}}}
\newcommand{\bbT}{{\mathbb{T}}}
\newcommand{\calS}{{\mathcal S}}
\newcommand{\calC}{{\mathcal C}}
\newcommand{\eps}{\varepsilon}
\newcommand{\tht}{\theta}
\newcommand{\til}{\tilde}
\begin{document}
\title[Transition fronts for bistable and ignition reactions]
{Existence and non-existence of transition fronts \\ for bistable and ignition reactions}

\author{Andrej Zlato\v s}
\address{\noindent Department of Mathematics \\ University of
Wisconsin \\ Madison, WI 53706, USA \newline Email:
zlatos@math.wisc.edu}


\begin{abstract}
We study reaction-diffusion equations in one spatial dimension and with general (space- or time-) inhomogeneous mixed bistable-ignition reactions.  For those satisfying a simple quantitative hypothesis, we prove existence and uniqueness of transition fronts, as well as convergence of ``typical'' solutions to the unique transition front (the existence part even extends to mixed bistable-ignition-monostable reactions).  These results also hold for all pure ignition reactions without any other hypotheses, but not for all pure bistable reactions.  In fact, we find examples of either spatially or temporally periodic pure bistable reactions (independent of the other space-time variable) for which we can prove non-existence of transition fronts.  These are the first such results for periodic media which are non-degenerate in a natural sense, and the spatially periodic examples also prove a conjecture from \cite{DHZ}. 
\end{abstract}

\maketitle

\section{Introduction} \lb{S1}

We study reaction-diffusion equations
\beq\lb{1.1}
u_t=u_{xx}+f(x,u)
\eeq
and 
\beq\lb{1.1a}
u_t=u_{xx}+f(t,u)
\eeq
in one spatial dimension.  
These equations are used to model a host of natural processes such as combustion, population dynamics, pulse propagation in neural networks, or solidification dynamics.
We will consider here the cases of either {\it spatially} \eqref{1.1} or {\it temporally} \eqref{1.1a} {\it inhomogeneous mixed bistable-ignition reactions}.  We are primarily interested in general (non-periodic) reactions, but our results are new even in the periodic case.

For homogeneous media, one usually considers bistable reactions to have $\til\tht\in(0,1)$ such that $f(0)=f(\til\tht)=f(1)=0$, with $f<0$ on $(0,\til\tht)$ and $f>0$ on $(\til\tht,1)$, while ignition reactions have  $f=0$ on $(0,\til\tht)$ and $f>0$ on $(\til\tht,1)$.  It is also standard to consider $f$ non-increasing near 0 and 1
(and sometimes even $f'(1)<0$, along with $f'(0)<0$ for bistable $f$).
One is then interested in solutions $0\le u\le 1$ which transition between the (stable) equilibria $u\equiv 0$ and $u\equiv 1$,  modeling  invasions of one  equilibrium of the relevant physical process by another.   Typically these include solutions evolving from initial data which are {\it spark-like} (with $\lim_{|x|\to\infty} u(0,x)=0$), or {\it front-like} (with $\lim_{x\to\infty} u(0,x)=0$ and $\liminf_{x\to-\infty} u(0,x)>\til\tht$).  It is customary to also assume $\int_0^1 f(u)du>0$, so that  solutions  which are initially above some $\beta>\til\tht$ on a large enough $\beta$-dependent interval converge locally uniformly to 1 as $t\to\infty$ (i.e., they propagate).  One is then interested in the nature of the transition from 0 to 1.  (Note that the roles of 0 and 1 are  reversed if $\int_0^1 f(u)du<0$ for bistable $f$.)  

The study of transitions between equilibria of reaction-diffusion equations has seen a lot of activity since the seminal papers of Kolmogorov, Petrovskii, Piskunov  \cite{KPP} and Fisher \cite{Fisher} (who studied homogeneous reactions). 
We are here interested in this question for $f$ which also depends on $x$ or $t$, and we will also relax the requirement for a single sign change of $f(x,\cdot)$ or $f(t,\cdot)$ in $(0,1)$.  We will therefore assume the following hypothesis.  Let us consider only \eqref{1.1} for the time being; \eqref{1.1a} will be treated afterwards.

\medskip
{\it Hypothesis (H):  $f$ is Lipschitz with constant $K\ge 1$, 
\beq\lb{1.2}
f(x,0)=f(x,1)=0 \qquad \text{for $x\in\bbR$,}  
\eeq
and there is $\tht>0$ such that for each $x\in\bbR$, $f$ is non-increasing in $u$ on $[0,\tht]$ and on $[1-\tht,1]$. 
Moreover, there are $0<\tht_1\le\tht_0<1$ and Lipschitz functions $f_0,f_1:[0,1]\to\bbR$  with $f_0\le f_1$,
\[
\begin{split}
f_0(0)=f_0(1) & =  f_1(0)  =f_1(1)=0, 
\\ f_0\le 0    \text{ on $(0,\tht_0)$} \qquad& \text{and}\qquad  f_0>0   \text{ on $(\tht_0,1)$,}
\\ f_1\le 0    \text{ on $(0,\tht_1)$} \qquad& \text{and}\qquad  f_1>0   \text{ on $(\tht_1,1)$,}
\end{split}
\]
\beq \lb{1.2c}
\int_0^1 f_0 ( u) du> 0, 
\eeq
such that
\[
f_0(u)\le f(x,u)\le f_1(u) \qquad \text{for $(x,u)\in \bbR\times [0,1]$}.
\]
}
\medskip

\begin{definition} \lb{D.1.0}
(i) We call any $f$ satisfying (H) a {\it BI reaction} (i.e., bistable-ignition).  

(ii) If $f$ is a BI reaction and $f_1< 0$ on $(0,\tht_1)$, then $f$ is a {\it bistable reaction}.  If there is also an increasing function $\gamma:[0,\infty)\to [0,\infty)$ and for each $x\in\bbR$ there is $\til \tht_x\in[\tht_1,\tht_0]$ such that 
\[
\sgn(u-\til\tht_x) f(x,u)\ge \gamma \big(\dist(u,\{0,\til\tht_x,1\}) \big)
\]
 for $u\in[0,1]$, then $f$ is a {\it pure bistable reaction}. 
 
(iii) If $f$ is a BI reaction and $f_0=0$ on $(0,\tht_0)$, then $f$ is an {\it ignition reaction}.   If there are also $\gamma$ and $\til \tht_x$ as in (ii) such that now $f(x,u)=0$ for $u\in[0,\til \tht_x]$ and
\[
f(x,u)\ge \gamma \big(\dist(u,\{\til\tht_x,1\}) \big)
\]
 for $u\in[\til\tht_x,1]$, then $f$ is a {\it pure ignition reaction}. 
\end{definition}

{\it Remark.}  We note that if instead $\tht=0=\tht_1$ in (H), then $f$ is a {\it mixed bistable-ignition-monostable reaction}, and it is a {\it pure monostable reaction} if (iii) above holds with $\til\tht_x\equiv 0$.\smallskip

Let us now briefly review some of the relevant literature for bistable and ignition reactions in one dimension (their mixtures, allowed here, may not have been studied before).  In these papers, \eqref{1.2c} need not always be assumed for bistable reactions and other hypotheses may be included. There is also a large body of work on monostable reactions in one dimension, as well as on all  reaction types in several dimensions, and the interested reader can consult \cite{Berrev, Xin2, XinBook} for reviews of these results and other related developments.


A useful tool in the study of the evolution of solutions of reaction-diffusion equations can often be special solutions called {\it transition fronts}.  These are entire solutions $w$ of \eqref{1.1}, 
defined in \cite{Matano, Shen} for some special situations and later in \cite{BH3} in more generality, satisfying
\beq\lb{1.3}
\lim_{x\to-\infty} w(t, x+x_t)=1 \qquad\text{and}\qquad \lim_{x\to\infty} w(t, x+x_t)=0
\eeq
uniformly in $t\in\bbR$, with $x_t:=\max \{x\in\bbR\,|\, w(t,x)=\tfrac 12\}$ (which includes existence of $x_t$).  This is the definition of the {\it right-moving} transition front, while the {\it left-moving} one is defined with 1 and 0 swapped.  Note that we consider here only fronts connecting 0 and 1, not those connecting other equilibria of the PDE (if they exist).  Also, we will only consider here fronts with $0\le w\le 1$, since we do not assume anything about $f(x,u)$ for $u\notin[0,1]$.  It is easy to show, however, that  if $f(x,u)\ge 0$ for $u< 0$ and  $f(x,u)\le 0$ for $u>1$, then any transition front satisfies $0< w< 1$ \cite{BH3,ZlaInhomog}.  Finally, we note that the definition we use here is that of transition fronts with a {\it single interface}.  The definition in \cite{BH3}, restricted to one dimension, allows any bounded-in-time number of interfaces, but most papers studying transition fronts in one dimension use the (very natural) single interface version.

In media where there exists a unique right-moving and a unique left-moving transition front (up to a translation in $t$), one can sometimes show that typical solutions converge to their time-shifts as $t\to\infty$.  The simplest such case are homogeneous media $f(x,u)=f(u)$, where transition fronts are known to be unique for ignition and bistable reactions, and take the form of {\it traveling fronts} $w(t,x)=W(x-ct)$ (right-moving) and $w(t,x)=W(-x-ct)$ (left-moving),  with a unique {\it front speed} $c>0$ and the {\it front profile} $W$ solving $W''+cW'+f(W)=0$ on $\bbR$ and having the limits $\lim_{s\to-\infty} W(s)=1$ and $\lim_{s\to\infty} W(s)=0$.  

The situation is slightly more complicated for spatially periodic media, where existence and uniqueness of {\it pulsating fronts} (first defined in \cite{SKT}, these are transition fronts satisfying $u(t+ \tfrac pc,x)=u(t,x-p)$ with $p$ the spatial period of $f$ and $c$ the front speed,  whose profile is time-periodic in a moving frame) 
has been proved for fairly general ignition reactions \cite{BH} but only for some special cases of bistable reactions.  This includes near-homogeneous reactions  \cite{VakVol} (see also \cite{Xin} for a related result), reactions with a constant $\til\tht_x$   (i.e., $\tht_1=\tht_0$ in (H)) \cite{NolRyz}, those for which \eqref{1.1} has no stable periodic steady states between 0 and 1  \cite{FanZha}, and those with small or large spatial periods  \cite{DHZ,DHZ2} (hence our Theorem \ref{T.1.1}(i) below is new even for periodic bistable reactions).  There is a good reason for such limitations: while uniqueness holds at least for non-stationary pulsating fronts if we also assume $f_1'(0)<0$ and $f_0'(1)<0$ \cite{DHZ2}, existence does not even for pure bistable reactions, as we show in Theorem \ref{T.1.1}(iii) below (and therefore also prove a conjecture from \cite{DHZ}).   

Another reason for the added difficulties in the inhomogeneous bistable case is the fact  that solutions may stop propagating and stationary fronts may exist, although not when \eqref{1.2c} holds.  This can naturally happen when $\int_0^1 f(x,u)du$ changes sign as $x$ varies \cite{DHZ2}, but  it can even happen for periodic pure bistable reactions with $\int_0^1 f(x,u)du>0$ for all $x\in\bbR$.  
For instance, we can take $v(x):=\tfrac 12-\tfrac 1\pi \arctan x$ and $g(u):=-v''(\tan(\tfrac \pi 2 -\pi u))$ (so that $v''+g(v)=0$ and $g$ is pure bistable with $\til\tht=\tfrac 12$ and $\int_0^1 g(u)du=0$).  Then we take any $x$-periodic $f$ with $\int_0^1 f(x,u)du>0$ for each $x\in\bbR$ such that $f(x,u)=g(u)$ for $(x,u)\in\bbR\times([0,\tfrac 16]\cup[\tfrac 14,\tfrac 34]\cup[\tfrac 56,1])$ as well as for $(x,u)\in(-\sqrt 3,-1)\times (\tfrac 34,\tfrac 56)$ and $(x,u)\in(1,\sqrt 3)\times (\tfrac 16,\tfrac 14)$.  Such pure bistable $f$ (not satisfying \eqref{1.2c}) easily exists and satisfies $v''(x)+f(x,v(x))=0$ for all $x\in\bbR$ because $v((-\sqrt 3,-1))=(\tfrac 34,\tfrac 56)$ and $v((1,\sqrt 3))=(\tfrac 16,\tfrac 14)$.
We refer the reader to \cite{DHZ2,LewKee, FifPel,Xin} and the references therein for further studies of such {\it wave-blocking} phenomena for bistable reactions.

As for non-periodic media, it was proved in \cite{MRS, NolRyz, MNRR} for ignition reactions of the form $f(x,u)=a(x)g(u)$ with some bounded $a\ge 1$ and a pure ignition $g$ (in particular, $\tht_1=\tht_0$), that exponentially decaying front-like solutions  converge to a unique right-moving front in $L^\infty(\bbR)$ as $t\to\infty$, while spark-like ones converge to it in $L^\infty({\bbR^+})$ and to a unique left-moving one in $L^\infty({\bbR^-})$.  This was extended to general ignition reactions satisfying a non-vanishing condition in \cite{ZlaGenfronts} (see Section \ref{S6} below).  For bistable reactions, these results again hold for $f(x,u)=a(x)g(u)$ with $a\ge 1$ and a pure bistable $g$ (in particular, $\tht_1=\tht_0$) 
\cite{NolRyz}, and existence of transition fronts was proved earlier for general near-homogeneous bistable reactions in \cite{VakVol}.  

On the other hand, one can easily construct situations in which transition fronts (connecting 0 and 1) do not exist, even if $f$ satisfies (H).  A simple example is a homogeneous reaction with $f(\tfrac 12)=0$ which is bistable when restricted to $u\in[0,\tfrac 12]$ (with a unique front speed $c'$) as well as when restricted to $u\in[\tfrac 12,1]$ (with a unique front speed $c''$), 
and $f(u+\tfrac 12)<f(u)$ for $u\in(0,\tfrac 12)$ (so $c''<c'$).  In that case it is easy to show that for typical solutions, the transition $0\to \tfrac 12$ propagates with speed $c'$ while the transition $\tfrac 12 \to 1$ propagates with the slower speed $c''$, creating a linearly-in-$t$ growing ``terrace'' on which $u(t,\cdot)\sim\tfrac 12$.  This and more general such situations were recently studied in \cite{DGM}.  Of course, such reactions are in some sense degenerate, being made of two or more bistable (or other type) reactions ``glued'' end-to-end.  They thus do not resolve the abovementioned question  of whether transition fronts  must always exists for general ``non-degenerate'' (i.e., pure) bistable reactions in one dimension. (For pure ignition reactions this can be answered in the affirmative using the general ignition reactions result from \cite{ZlaGenfronts} --- see Theorem \ref{T.1.1}(ii) below. For pure monostable reactions  the answer is negative \cite{NRRZ}.)
 
In the present paper we prove that existence and uniqueness of transition fronts holds for general inhomogeneous mixed bistable-ignition reactions which satisfy a simple quantitative hypothesis, and that in this case  exponentially decaying front-like and spark-like solutions again converge to these fronts as $t\to\infty$.  The same result  holds for all pure ignition reactions, without the extra hypothesis. On the other hand,  we also show that this hypothesis is not only technical.  In fact, we construct an example of a spatially  periodic pure bistable reaction for which no transition fronts exist, thus resolving the above question of their existence for pure bistable reactions in the negative.  (We note that the latter holds only in the sense of \eqref{1.3}, for fronts connecting 0 and 1.  Fronts connecting other equilibrium solutions $0\le u^-< u^+\le 1$ of \eqref{1.1} may still exist, such as a front connecting 0 and $\tfrac 12$ and another connecting $\tfrac 12$ and 1 in the example from the previous paragraph.)  This example is, to the best of our knowledge, the first of a non-degenerate (in the sense from the previous paragraph) {\it periodic} reaction of any kind in one dimension for which no transition fronts exist, since the monostable reaction examples from \cite{NRRZ} are not periodic.  (We note that non-existence of {\it traveling fronts} was previously proved for bistable reactions on some cylinders with dumbbell-shaped cross-sections and an additional shear flow \cite{BHbist}.  The reaction is homogeneous in these and the result hinges instead on the special cross-section and on the added flow.  It seems that transition fronts connecting 0 and 1 should not exist in these examples either, although those connecting other equilibria of the PDE in question again may exist.)

Here is our first main result, containing the above claims for \eqref{1.1}.  


\begin{theorem} \lb{T.1.1}
Let $f$ be a BI reaction from (H), with $c_0$ the unique front speed for $f_0$. 

(i) Assume that $f_1(u)< \tfrac {c_0^2}4 u$ for all $u\in(0,\tht_1']$, with $\tht_1'\in[\tht_0,1)$ given by  $\int_{\tht_1}^{\tht_1'} f_0(u) du=0$.  Then there exists a unique (up to translation in $t$) right-moving transition front $w$ for \eqref{1.1} (and a unique left-moving one $\til w$), which then also satisfies $w_t>0$.  Moreover,  solutions with exponentially decaying initial data converge to time shifts of $w, \til w$ (see Definition \ref{D.1.0a} below).

(ii)  The claims in (i) hold  (without the hypothesis on $f_1$) if $f$ is a pure ignition reaction.

(iii)  There exists an $x$-periodic pure bistable reaction $f$ such that there is no (right- or left-moving) transition front for \eqref{1.1} in the sense of  \eqref{1.3}.  In fact, there are $\eps_0,m>0$ such that any solution $0\le u\le 1$ with $u(0,\cdot)\ge \tfrac 12\chi_{[-m,m]}$ and $\lim_{x\to\infty} u(0,x)=0$ takes values within $[\eps_0,1-\eps_0]$ on spatial intervals whose lengths grow linearly in time as $t\to\infty$.
\end{theorem}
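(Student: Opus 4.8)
The plan is to build the reaction so that the PDE \eqref{1.1} admits a \emph{stable} $x$-periodic steady state $q(x)$ taking values in a compact subinterval $[\eps_0,1-\eps_0]\subset(0,1)$, arranged so that the bistable-type front by which $q$ invades $0$ travels strictly faster than the front by which $1$ invades $q$. The growing intermediate plateau at level $q$ then appears because these two sub-fronts drift apart, and non-existence of a genuine $0$--$1$ front is deduced from the dynamical (``In fact'') statement itself. So the bulk of the work is to construct $f$ and to prove that statement.

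First I would record the reduction of non-existence to the dynamical statement. Any transition front $w$ has, by \eqref{1.3} (which is uniform in $t$), a uniformly bounded interface: $\sup_t \diam\{x:\eps_0\le w(t,x)\le 1-\eps_0\}<\infty$. On the other hand, for a right-moving front $\lim_{x\to-\infty}w(t,x)=1$, so $w(t,\cdot)\ge\tfrac12$ on $(-\infty,x_t]$; using that \eqref{1.1} is autonomous in $t$, a suitable time-translate of $w$ (one whose interface lies to the right of $m$) is again a front whose time-$0$ slice is an admissible datum for the ``In fact'' claim. That claim would then force the band $\{x:\eps_0\le w(t,x)\le1-\eps_0\}$ to have length growing linearly in $t$, contradicting the bounded interface; the left-moving case is symmetric. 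Hence it suffices to establish the dynamical statement (a blocked front whose interface stays bounded is excluded by the same squeezing barriers used below).

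The construction is the main obstacle. Following the explicit steady-state device already used for the wave-blocking example above, I would prescribe a smooth $x$-periodic profile $q$ with range inside $(0,1)$, choose $f(x,\cdot)$ on the range of $q$ so that $q''+f(x,q)=0$, and then extend $f(x,\cdot)$ to all of $[0,1]$ so that it is pure bistable in the sense of Definition \ref{D.1.0}(ii) --- a single interior zero $\til\tht_x$, the prescribed sign conditions, and the quantitative lower bound with some $\gamma$ --- while forcing the principal periodic eigenvalue $\mu$ of $\phi''+f_u(x,q(x))\phi=\mu\phi$ to be negative, which makes $q$ stable. With $0$, $q$, $1$ all stable, the front connecting $0$ and $q$ has a unique speed $c_1>0$ and the one connecting $q$ and $1$ a unique speed $c_2$; since the shape of $f$ on the ranges $[0,\min q]$ and $[\max q,1]$ governs the two transitions nearly independently, I would tune it so that $c_1>c_2$. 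Such an $f$ must violate the hypothesis of Theorem \ref{T.1.1}(i) (otherwise a front would exist), so the construction necessarily lives in the regime where $f_1$ is large near $0$; verifying all of (H) simultaneously with stability of $q$ and the ordering $c_1>c_2$ is the technical heart of the argument.

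Finally, the terrace follows from the comparison principle once these building blocks are in hand. For the leading (lower) edge, data that are $\ge\tfrac12$ on a long enough interval propagate by \eqref{1.2c}, and slipping under $u(0,\cdot)$ a compactly supported subsolution that spreads at the bistable speed $c_1$ of the $0$--$q$ front shows that $u\ge\eps_0$ to the left of a point moving rightward at speed $c_1-o(1)$. For the trailing (upper) edge, placing above $u(0,\cdot)$ a copy of the $q$--$1$ front --- an exact solution of \eqref{1.1} with range $[q,1]$, hence a legitimate comparison function --- whose $1$-plateau advances at speed $c_2$ yields $u\le 1-\eps_0$ to the right of a point moving at speed $c_2+o(1)$. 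Since $c_2<c_1$, for large $t$ the band $c_2 t<x<c_1 t$ (up to fixed additive constants) satisfies $\eps_0\le u\le 1-\eps_0$ and has length $(c_1-c_2)t-o(t)$, which is exactly the claim.
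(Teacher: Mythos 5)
Your proposal addresses only part (iii); parts (i) and (ii) --- existence, uniqueness, $w_t>0$, and convergence of exponentially decaying front-like and spark-like solutions, which occupy the bulk of the paper (the approximating solutions $u_n$, the interface-width control via $X_n,Y_n$ in Lemma \ref{L.2.1}, and the stability/uniqueness machinery of Section \ref{S2}) --- are not touched at all. So even if your argument for (iii) were complete, it would prove only a fraction of the statement.

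Within (iii) there is a genuine gap: your two key building blocks are pulsating fronts connecting $0$ to $q$ (speed $c_1$) and $q$ to $1$ (speed $c_2$), with $c_1>c_2$, and for the trailing edge you explicitly place ``a copy of the $q$--$1$ front'' above the solution as a comparison function. But existence of bistable-type pulsating fronts between two stable periodic states in a periodic medium is exactly the kind of statement that can fail --- it is the content of the theorem you are proving --- and it is only known under special structural hypotheses (\cite{VakVol,NolRyz,FanZha,DHZ,DHZ2}); you cannot invoke it for a reaction you are simultaneously tuning to violate such hypotheses, and ``I would tune it so that $c_1>c_2$'' is not an argument. The paper's proof is designed precisely to avoid this circularity: it needs no intermediate fronts and no stable intermediate state. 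Instead, it takes a periodic steady state $p$ of the \emph{homogeneous} equation $p''+f_0(p)=0$, builds a pure bistable $f\ge f_0$ with $f(x,u)=f_0(u)$ for $u\ge p(x)$, and uses the explicit supersolution $p(x)+e^{-\sqrt{\kappa}(x-A-2\sqrt{\kappa}t)}$ (with $\kappa$ a Lipschitz constant of $f_0$) to cap the speed of the upper transition by $2\sqrt{\kappa}$ unconditionally; meanwhile $f$ is boosted by $K\dist(u,\{\tfrac a2,\tfrac{3\tht_0+\tht_0'}4\})$ on strips $|x-nM|\le m$ below $p$, and an elementary heat-kernel-plus-iteration argument (\eqref{3.1}--\eqref{3.2}) shows the region where $u\ge\tht_0$ spreads at speed at least $M/2\delta$, which is made larger than $2\sqrt{\kappa}$ by choosing $\delta$ small. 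The linearly growing band of intermediate values then follows from comparison alone. Your reduction of non-existence to the dynamical claim is fine and matches the paper's logic, but without replacing the assumed sub-fronts by such explicit barriers the construction --- which you yourself identify as ``the technical heart'' --- is missing.
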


{\it Remarks.}  1. The proof of (i) shows that its existence part extends to  mixed bistable-ignition-monostable reactions with $f_1$ satisfying the hypothesis from (i).
\smallskip

2. Note that the hypothesis of (i) is automatically satisfied when $\tht_1=\tht_0$ (as in \cite{NolRyz, MRS,MNRR}), or when $\tht_1$ is close enough to $\tht_0$ (e.g., when $\int_{\tht_1}^{4K(4K-c_0^2)^{-1}\tht_1}f_0(u) du>0$).
\smallskip

3. In (i), the limits in \eqref{1.3} are uniform in $f$ for any fixed $f_0,f_1,K$, while those in  \eqref{1.5} and \eqref{1.6} below are uniform in $f,u$ for any fixed $f_0,f_1,K,\tht,Y,\mu,\beta$ (with $l_{f_0,\beta}$ in Definition \ref{D.1.0a}(b) only depending on $f_0,\beta$).  In (ii) this is true if we also fix $\gamma$ from Definition \ref{D.1.0}. 
\smallskip

4. The first claim in (iii) thus proves the conjecture from \cite{DHZ} about existence of such  reactions.  The second claim  describes the reason behind non-existence of transition fronts for these reactions.
\smallskip



\begin{definition} \lb{D.1.0a}
Let $w,\til w$ be some right- and left- moving transition fronts for \eqref{1.1}. We say that {\it solutions with exponentially decaying initial data converge to time shifts of $w,\til w$} if the following hold for any $Y,\mu>0$, $\beta>\tht_0$, and $a\in\bbR$.
 
(a) If $u$ is a (front-like) solution of  \eqref{1.1} with  
\[
\beta \chi_{(-\infty,a]}(x)\le u(0,x) \le e^{-\mu(x-a-Y)},
\]
then there is $\tau_u$ such that 
\beq \lb{1.5}
\lim_{t\to\infty} \|u(t,\cdot) - w(t+\tau_u,\cdot)\|_{L^\infty} =0
\eeq
(and similarly for $\til w$ and $u$ exponentially decaying as $x\to -\infty$).

(b) There is $l_{f_0,\beta}<\infty$ such that if $L\ge l_{f_0,\beta}$ and $u$ is a (spark-like) solution of \eqref{1.1} with
\[
\beta\chi_{[a-L,a+L]}(x)\le u(0,x) \le \min\{e^{-\mu(x-a-L-Y)}, e^{\mu(x-a+L+Y)}\},
\]
then there are $\tau_u,\til \tau_u$ such that 
\beq \lb{1.6}
\lim_{t\to\infty} \|u(t,\cdot) - w(t+\tau_{u},\cdot) - \til w(t+\til \tau_{u},\cdot) + 1\|_{L^\infty} =0.
\eeq
\end{definition}

%
%

Let us now turn to the time-inhomogeneous reactions case \eqref{1.1a}.  Here we replace in (H) and in Definition \ref{D.1.0}  each $x$ by $t$, while Definition \ref{D.1.0a} refers to convergence to space shifts of $w,\til w$ and has $w(t+\tau_{u},\cdot)$ and $\til w(t+\til \tau_{u},\cdot)$ replaced by $w(t,\cdot+x_u)$ and $\til w(t,\cdot+\til x_u)$.  The definition of transition fronts is unchanged.

The time-periodic bistable reaction case was first studied in \cite{ABC} (the abstract framework of  \cite{FanZha} also applies to this case), where it was proved that a unique pulsating front (now satisfying $u(t+ p,x)=u(t,x-pc)$ with $p$ the temporal period of $f$ and $c$ the front speed) exists provided the ODE $v'=f(t,v)$ has a unique periodic solution $v:\bbR\to(0,1)$, which is also unstable.

This was extended to almost-periodic and general stationary ergodic bistable reactions in \cite{Shen2,Shen3, Shen1}, provided that there is again a single solution $v:\bbR\to(0,1)$ of the ODE $v'=g(t,v)$ (which must also be unstable) for each $g$ in the $L^\infty_{\rm loc}$-closure of the family of all time-translates of $f$.
Finally,  some general results about transition fronts in stationary ergodic media were proved in \cite{Shen}, which were then applied to show existence of a transition front  for $f(t,u)=u(1-u)(u-a(t))$, with $a(t)\in[\tfrac 38,\tfrac 58]$ a stationary ergodic process.

The study of time-inhomogeneous ignition reactions is only very recent, with \cite{ShenShen, ShenShen2} proving existence, uniqueness, and stability of transition fronts for ignition reactions with a constant $\til\tht_t$  (so $\tht_1=\tht_0$), also satisfying some additional technical hypotheses.

We now state our second main result, the time-inhomogeneous version of Theorem \ref{T.1.1}, whose part (ii) also extends \cite{ShenShen, ShenShen2} to general pure ignition reactions.

\begin{theorem} \lb{T.1.1a}
Let $f$ be a BI reaction from (H) with each $x$ replaced by $t$, with $c_0$ the unique front speed for $f_0$. 

(i) Assume that $f_1(u)< \tfrac {c_0^2}4 u$ for all $u\in(0,\tht_0]$.  Then the claims in Theorem \ref{T.1.1}(i) hold for \eqref{1.1a}, 
with uniqueness of the front up to translation in $x$ and with $w_x<0$ instead of $w_t>0$.

(ii)  The claim in (i) holds  (without the hypothesis on $f_1$) if $f$ is a pure ignition reaction.

(iii)  Theorem \ref{T.1.1}(iii) holds for \eqref{1.1a}, with $f$ being a $t$-periodic pure bistable reaction.
\end{theorem}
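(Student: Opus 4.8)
The plan is to follow the architecture of the proof of Theorem \ref{T.1.1}, exchanging the roles of $x$ and $t$ throughout, and to isolate the few places where the first-order (rather than second-order) dependence on the inhomogeneity variable forces a change of argument. Since $f$ now depends only on $t$, spatial translation is a symmetry of \eqref{1.1a}, so fronts can only be unique up to translation in $x$, and the monotone profile should satisfy $w_x<0$ in place of $w_t>0$; this accounts for the modified statement. For part (i) I would first record the comparison principle and the basic sub/supersolutions built from the homogeneous profiles associated to $f_0$ and $f_1$, the point of the hypothesis $f_1(u)<\tfrac{c_0^2}4 u$ on $(0,\tht_0]$ being to guarantee that the $f_1$-driven spreading near $u=0$ never outruns the $f_0$-determined speed $c_0$, and hence that the interface has uniformly bounded width. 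Rather than evolving front-like data forward and translating in $x$ as in the spatial case, I would construct the entire front by solving \eqref{1.1a} with front-like data prescribed at a sequence of initial times $t_n\to-\infty$, renormalizing by a spatial shift so that the level set $\{w=\tfrac12\}$ stays at the origin, and extracting a locally uniform limit; parabolic estimates give an entire solution and the uniform width bound gives the limits \eqref{1.3}.

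Monotonicity $w_x<0$ then follows by the sliding method once the width bound is in place, and it in turn yields uniqueness up to $x$-translation: given two right-moving fronts, comparison after an $x$-shift squeezes one between two translates of the other and forces them to coincide. The convergence statements of Definition \ref{D.1.0a} (now phrased with $w(t,\cdot+x_u)$ and $\til w(t,\cdot+\til x_u)$) are obtained as before, by trapping a generic exponentially decaying front-like or spark-like solution between spatial shifts of the front using the same sub/supersolutions together with the bounded-width estimate. For part (ii) the hypothesis on $f_1$ is dropped: for a pure ignition reaction $f$ vanishes on $[0,\til\tht_t]$, so there is no reaction to drive spreading near $u=0$, and the uniform interface-width bound follows directly from the ignition structure (as in the pure-ignition part of Theorem \ref{T.1.1}) rather than from a KPP-type comparison, while the remainder of the argument is unchanged; this is what yields the extension to general pure ignition reactions.

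For part (iii) I would mirror the spatial counterexample, producing a $t$-periodic pure bistable $f$ for which \eqref{1.1a} admits no transition front connecting $0$ and $1$. The mechanism is the temporal analogue of wave-blocking: one engineers the $t$-dependence so that the transition $0\to1$ cannot be completed at a uniform width over each period, and instead any solution issuing from data that is compactly supported in value develops a region on which $u\in[\eps_0,1-\eps_0]$ whose spatial extent grows linearly in $t$, which is incompatible with \eqref{1.3}. Concretely one would build $f$ from an explicit blocking profile, now arranged in time, and verify that the fronts for the $0\to$ intermediate and intermediate $\to1$ transitions travel at different speeds, so that the intervening plateau widens linearly, exactly as in the statement of Theorem \ref{T.1.1}(iii).

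The main obstacle I anticipate is the uniform-in-time control of the interface width for an entire solution in part (i): unlike the spatial case, one cannot lean on steady profiles of an $x$-dependent problem, and one must rule out spreading of the $0$-state near $u=0$ simultaneously for \emph{all} $t\in\bbR$, which is precisely what the sharp threshold $f_1(u)<\tfrac{c_0^2}4 u$ is calibrated to provide. Checking that this single bound survives the passage to the limit $t_n\to-\infty$, so that the constructed front genuinely satisfies \eqref{1.3} uniformly in $t$, is the delicate quantitative step; once it is secured, monotonicity, uniqueness, and convergence follow from soft comparison arguments.
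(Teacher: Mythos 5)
Your part (i) follows the paper's own route: solutions are started from the monotone front-like datum $v$ at initial times $-n\to-\infty$, recentered by a spatial shift $\xi_n$ so that the half-level set sits at the origin, and the front is extracted by parabolic regularity; the hypothesis $f_1(u)<\tfrac{c_0^2}4 u$ on $(0,\tht_0]$ is used exactly as you say, to keep the exponential tail (moving at speed $c_\zeta<c_0$) from outrunning the bulk (moving at speed $\ge c_0$). One small difference: monotonicity $w_x<0$ comes for free from $(u_n)_x\le 0$, which is preserved by the maximum principle and passed to the limit, rather than from a sliding argument; uniqueness and convergence then go through the stability estimates as you indicate.

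Parts (ii) and (iii), however, have genuine gaps. For (ii) you assert that the width bound ``follows directly from the ignition structure (as in the pure-ignition part of Theorem \ref{T.1.1})'' and that ``the remainder of the argument is unchanged.'' This is precisely where the new difficulty lies. The spatial pure-ignition case (Section \ref{S6}) is imported from \cite{ZlaGenfronts}, and its width bound ultimately rests on an elliptic fact --- any bounded solution of $0=u_{xx}+f(x,u)$ with $f\ge 0$ is constant --- which has no analog when $f$ depends on $t$. Moreover, for a time-dependent pure ignition reaction the threshold $\alpha_f(t)$ moves with $t$, so there need not exist any fixed $\tht_1''>\tht_0$ satisfying the KPP-type bound \eqref{2.5}; without it, the tail estimate \eqref{2.7} is unavailable at times when the bulk has caught up with the tail. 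The paper must therefore prove a new result (Theorem \ref{T.7.1}), whose key step, Lemma \ref{L.7.2}, replaces \eqref{2.7} by the measure-refined estimate \eqref{7.3} --- the tail advances by $c_\zeta(t-t')$ plus a term proportional to $\abs{\{s\in[t',t] \,|\, X_n(s)>Y_n(t')\}}$ --- and then proves \eqref{7.4} by a dichotomy: if the bulk overlaps the tail on a time set of measure $\ge\delta T$, the accumulated reaction (compared against an auxiliary problem $v_t=v_{xx}+h(t)$) forces the level $1-\eps_0$ to catch up with the tail; if not, the tail's average speed stays below $c_0$ and the bulk catches up anyway. None of this appears in your proposal, so your claimed width bound for pure ignition reactions has no proof.

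For (iii), ``mirroring'' the spatial counterexample does not work, because the upper barrier there is a periodic \emph{stationary} solution of $p''+f_0(p)=0$ capping solutions from above, an object with no temporal counterpart. Also, for a pure bistable reaction there are no fronts for the ``$0\to$ intermediate'' and ``intermediate $\to1$'' transitions at any frozen time --- each $f(t,\cdot)$ has a single sign change in $(0,1)$ --- so the terrace you invoke cannot be read off from the reaction itself; it can only emerge from the time-alternation. That is the actual construction of Section \ref{S5}: $f$ alternates (with period $4$) between two pure bistable reactions $f_0\le f_1$ that are $\delta u$-close to degenerate reactions $g_0,g_1$ with complementary structure, where $g_0$ vanishes on $[0,\tfrac12]$ and is negative on $(\tfrac12,\tfrac23)$ (pulling the level $\tfrac58$ down during the $f_0$-phase), while $g_1$ has a dead zone on $[\tfrac12,\tfrac23]$ (blocking upward progress during the $f_1$-phase) and a large bump on $(\tfrac1{11},\tfrac12)$ (making low intermediate values spread by $3M$ per period while the upper level advances only $2M$). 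Quantifying these four effects is the content of Lemma \ref{L.5.1}, and this key idea is absent from your sketch.
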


{\it Remark.} The remarks after Theorem \ref{T.1.1} are also valid here.
\smallskip

We close this introduction with an application of our results to the cases of periodic and stationary ergodic reactions.

\begin{corollary}\lb{C.1.5}
The following hold under the hypotheses of one of Theorem \ref{T.1.1}(i), Theorem~\ref{T.1.1}(ii), Theorem \ref{T.1.1a}(i), Theorem \ref{T.1.1a}(ii).

(i)  If $f$ is spatially/temporally periodic, then the unique transition front is a pulsating front.

(ii)  If $f$ is stationary ergodic with respect to spatial/temporal translations (see Section~\ref{S8} for the precise definition of this), then the unique transition front almost surely has a deterministic asymptotic speed $c>0$ in the sense of $\lim_{|t|\to\infty} \tfrac{x_t}t=c$.
\end{corollary}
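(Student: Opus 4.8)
The plan is to prove Corollary \ref{C.1.5} by transferring the uniqueness and convergence results (Theorems \ref{T.1.1} and \ref{T.1.1a}) into the structured settings of periodic and stationary ergodic media, exploiting the invariance of the equation under the relevant symmetries together with the \emph{uniqueness up to translation} of the transition front.

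\medskip

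\emph{Proof of (i).} Consider the spatially periodic case under the hypotheses of Theorem \ref{T.1.1}(i) or (ii); let $p$ be the spatial period of $f$, so that $f(x+p,u)=f(x,u)$ for all $(x,u)$. Let $w$ be the unique right-moving transition front. The key observation is that if $w(t,x)$ solves \eqref{1.1}, then so does $w(t,x-p)$, since shifting $x$ by the period leaves the reaction term unchanged. Moreover $w(t,x-p)$ is again a right-moving transition front (its interface position is simply $x_t+p$, which still satisfies \eqref{1.3} uniformly in $t$). By the uniqueness part of Theorem \ref{T.1.1}(i)/(ii), any two right-moving transition fronts coincide up to a translation in $t$; hence there is a constant $s$ such that
\beq\lb{cor-pf-1}
w(t,x-p)=w(t+s,x) \qquad\text{for all } (t,x)\in\bbR^2.
\eeq
It remains to identify $s$ with $\tfrac pc$ for a front speed $c>0$. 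Since $w_t>0$ (from Theorem \ref{T.1.1}(i)), the shift $s$ is uniquely determined and nonzero; tracking the interface $x_t$ through \eqref{cor-pf-1} gives $x_{t+s}=x_t+p$, so $x_t$ increases by $p$ over each time increment $s$, which forces $x_t\to+\infty$ and yields the linear-in-time average speed $c:=p/s>0$. Writing \eqref{cor-pf-1} as $w(t+\tfrac pc,x)=w(t,x-p)$ is exactly the pulsating front relation, and the time-periodicity of the profile in the moving frame follows from iterating \eqref{cor-pf-1}. The temporally periodic case under Theorem \ref{T.1.1a}(i)/(ii) is entirely analogous, now using invariance under $t\mapsto t+p$ and uniqueness up to translation in $x$ (with $w_x<0$ playing the role of $w_t>0$), yielding $w(t+p,x)=w(t,x-pc)$.

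\medskip

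\emph{Proof of (ii).} In the stationary ergodic case, let $(\Omega,\bbP)$ carry the ergodic group of spatial (resp.\ temporal) translations, and let $f_\omega$ denote the realization of the random reaction. By Theorem \ref{T.1.1} (resp.\ \ref{T.1.1a}), for almost every $\omega$ there is a unique right-moving transition front $w_\omega$ with interface $x_t^\omega$, and uniqueness forces $w_\omega$ to be measurably adapted to the translation action, so that $x_t^\omega$ transforms equivariantly under translations of $\omega$. The plan is to extract the asymptotic speed from a subadditive or additive ergodic argument: the increments of $x_t^\omega$ over disjoint time intervals are governed by the same stationary law, so $\{x_t^\omega\}$ behaves like an additive (stationary-increment) cocycle over the flow. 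Applying Kingman's subadditive ergodic theorem (or the Birkhoff ergodic theorem to the stationary increment process) gives a deterministic limit $c:=\lim_{|t|\to\infty} x_t^\omega/t$ that is almost surely constant by ergodicity. Positivity $c>0$ follows from the propagation built into \eqref{1.2c} and the monotonicity $w_t>0$ (resp.\ $w_x<0$), which prevent the interface from stalling: the convergence results of Definition \ref{D.1.0a} show that the front genuinely invades, ruling out a zero speed.

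\medskip

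The main obstacle I expect is the stationary ergodic part (ii): one must first verify that the \emph{measurable selection} of the unique front $w_\omega$ is genuinely stationary, i.e.\ that $w_{\tau\omega}(t,\cdot)$ agrees (up to the allowed translation) with the translate of $w_\omega$, so that the interface increments form a bona fide stationary process to which an ergodic theorem applies. Uniqueness up to translation is what makes this work, but pinning down the correct normalization of the translation parameter (the ``$\tau_u$'' freedom) to obtain an honest additive cocycle requires care, and establishing the integrability needed for Kingman's theorem uses the uniform-in-$f$ estimates from Remark 3 after Theorem \ref{T.1.1}. The periodic case (i), by contrast, is essentially immediate once uniqueness is invoked, as the shift relation \eqref{cor-pf-1} is forced algebraically.
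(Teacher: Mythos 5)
Your part (i) is correct and is essentially the paper's argument (the paper states it in one line: the period translate of the front is again a transition front, so uniqueness forces the pulsating-front relation); your elaboration of the sign of the shift via $w_t>0$ is fine.

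Part (ii), however, has a genuine gap, and you have in fact named it yourself without closing it. First, your cocycle is set up incorrectly for the spatially ergodic case: you propose that ``the increments of $x_t^\omega$ over disjoint time intervals are governed by the same stationary law,'' but when $f$ is stationary ergodic under \emph{spatial} translations there is no measure-preserving time flow on $\Omega$, so time-increments of $x_t^\omega$ carry no stationarity structure at all. The correct object is space-indexed: the times at which the interface crosses the lattice points $np$ (and, symmetrically, space-indexed displacements at times $np$ in the temporally ergodic case). Second, even with the right indexing, applying an ergodic theorem directly to the front requires a measurable, translation-equivariant selection $\omega\mapsto w_\omega$ and a normalization of the free translation parameter --- exactly the obstacle you flag at the end, and which your proposal does not resolve. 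The paper circumvents both issues at once: it never applies the ergodic theorem to the front. Instead it takes the deterministic initial data $v$ from Section \ref{S2} and the Cauchy solutions $u_m$ of \eqref{1.1} with $u_m(0,x)=v(x-mp)$ (respectively $u_m(mp,x)=v(x)$ for \eqref{1.1a}), and defines the hitting times
\[
\tau_{m,n}(\omega) := \inf \big\{ t\ge 0 \,\big|\, u_m(t,x)\ge v(x-np) \text{ for all } x\in \bbR \big\}
\]
(respectively the displacements $\xi_{m,n}$). These are manifestly measurable in $\omega$, equivariant ($\tau_{m+k,n+k}(\omega)=\tau_{m,n}(\pi_k\omega)$), and subadditive by the comparison principle, so Kingman's theorem applies directly and yields a deterministic $\tau$ (resp.\ $\xi$). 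Only then is the speed transferred to the front, using the convergence of each $u_m$ to a time/space shift of $w_\omega$, which is \emph{uniform in $m$ and $\omega$} by Remark 3 after Theorem \ref{T.1.1}; this uniformity is what lets the asymptotics of $\tau_{0,n}$ control the asymptotics of $x_t^\omega$ itself. Without this device (or an actual construction of a stationary measurable selection of fronts), your outline does not yet constitute a proof.
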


The author thanks  Peter Pol\' a\v cik for a helpful discussion about Theorem \ref{T.1.1}(iii).  He also acknowledges partial support by NSF grant DMS-1056327.

\section{Proof of Theorem \ref{T.1.1}(i)} \lb{S2}

This follows the lines of a similar proof for ignition reactions in \cite{ZlaGenfronts}.  That proof was done for the PDE 
\beq\lb{2.1}
u_t  = \nabla \cdot (A(x) \nabla u) +  q(x)\cdot \nabla u + f(x,u)
\eeq
with $x\in \bbR\times\bbT^{N-1}$, a uniformly elliptic periodic $n\times n$ matrix $A$ and a divergence-free periodic vector field $q$, but not necessarily periodic $f$.  We will not consider this setting here.

The  existence part of the proof will be done in detail, since it has non-trivial differences from \cite[Section 2]{ZlaGenfronts}.  Once this is obtained, proofs of uniqueness of the transition front and of convergence of typical solutions to its time-shifts are virtually identical to those in \cite[Sections 3 and 4]{ZlaGenfronts}.  (The ignition property is used in them several times, but it is immediately obvious that $f(x,\cdot)$ being non-increasing on $[0,\tht]$ for each $x\in\bbR$ suffices instead.)   We will therefore only sketch these two parts of the proof here, both for the convenience of the reader as well as for later reference in the proof of Theorem \ref{T.1.1a}(i).  


\pagebreak

\noindent
{\bf Existence of a front}
\vskip 3mm

Pick any ($f_0$-dependent) $\eps_0\in(0,\tht_0)$ such that $\int_0^{1-\eps_0} f_0(u)du>0$ and $1-\eps_0$ is greater than any point of maximum of $\tfrac{f_0(u)}u$.
Using \eqref{1.2c}, it is easy to construct $v:\bbR\to[0,1]$ satisfying $v''+f_0(v)\ge 0$, supported on $\bbR^-$, and equal to $1-\eps_0$ for $x\ll -1$.
One can take $v\equiv 1-\eps_0$ on $(-\infty, 0]$, let $v''+f_0(v)=0$ (with $v(0)=1-\eps_0$ and $v'(0)=0$) on $(0,r)$, where $r>0$ is smallest such that $v(r)=0$, and let $v\equiv 0$ on $[r,\infty)$ (then we shift $v$ by $r$ to the left).  The existence of $r$ follows from multiplying $v''+f_0(v)=0$ by $v'$ and integrating over $(0,x)$, which yields $\tfrac 12 v'(x)^2= F_0(1-\eps_0)-F_0(v(x))$, with 
\[
F_0(u):=\int_0^u f_0(s)ds.
\]
  Since $F_0(u)<F_0(1-\eps_0)$ for $u\in[0, 1-\eps_0)$ due to $F_0(1-\eps_0)=\int_0^{1-\eps_0} f_0(u)du>0$, we see that $v'$ cannot change sign before $v$ hits 0. So $v'$ stays negative, and then $v$ must hit 0 at some finite $r$ because $F_0(u)<F_0(1-\eps_0)$ for $u\in[0, 1-\eps_0)$.

We now let $u_n$ be the solution of \eqref{1.1} with initial condition $u_n(0,x)=v(x+n)$.  Then $f\ge f_0$ and $1-\eps_0>\tht_0$, together with well known spreading results \cite{AW2,FM}, imply that $\lim_{t\to\infty} u_n(t,x)=1$ locally uniformly.
Hence there is (minimal) $\tau_n$ such that $u_n(\tau_n,0)=\tfrac 12$, and then finite speed of propagation (e.g., $u_n(t,x)\le e^{-\sqrt\xi(x+n-2\sqrt\xi t)}$ for $\xi:=\max_{u\in(0,1]} \tfrac{f_1(u)}u$, since the exponential is a super-solution of \eqref{1.1} when we define $f(x,u)=0$ for $u>1$)  easily shows $\tau_n\to\infty$.  We let $\til u_n(t,x):=u_n(t+\tau_n,x)$, so that $\til u_n$ solves \eqref{1.1} on $(-\tau_n,\infty)\times\bbR$, with $\til u_n(0,0)=\tfrac 12$.  Parabolic regularity now shows that some subsequence of $\til u_n$ converges in $C^{1,2}_{\rm loc}$ to an entire solution $w$ of \eqref{1.1} with $w(0,0)=\tfrac 12$.   We also have $w_t\ge 0$ due to $(u_n)_t\ge 0$, which follows from $(u_n)_t(0,\cdot)\ge 0$ and the maximum principle for $(u_n)_t$.  To show that   $w$ is indeed a transition front, we now only need to prove that the limits \eqref{1.3} hold uniformly in $t\in\bbR$.
(This and $w(0,0)=\tfrac 12$ also imply $w_t\not\equiv 0$, and the strong maximum principle for $w_t$ then proves $w_t>0$ as well.)
 This will in turn be proved by showing that
\beq\lb{2.4}
 \sup_{n\in\bbN\,\&\,t\ge T_\eps} \diam \{x\in\bbR\,|\, u_n(t,x)\in[\eps,1-\eps]\} <\infty
\eeq
 for each $\eps>0$ and some $n$-independent $T_\eps<\infty$.

We now pick $\zeta<\tfrac{c_0^2}4$ and $\tht_1''>\tht_1'$ (both depending only on $f_0,f_1$) so that  
\beq\lb{2.5}
f_1(u)<  \zeta u \qquad \text{ for $u\in(0,\tht_1'']$},
\eeq
and let $c_\zeta:=2\sqrt\zeta$ and  $c_\xi:= (\xi+\zeta)\zeta^{-1/2}$.  It is well known that $c_0\le c_1\le 2\sqrt\xi$ (with $c_1$ the unique front speed for $f_1$), hence we have $\zeta< \xi$ and $c_\zeta<c_0\le  c_\xi$.  

Finally, for each $n\in\bbN$ and $t\ge 0$ we let
\[
X_n(t):= \max \{x \in\bbR \,|\, u_n(t,x)\ge\tht_1''\},
\]
\[
Y_n(t):= \min \{y \in\bbR \,|\, u_n(t,x)\le e^{-\sqrt\zeta(x-y)} \text{ for all $x\in \bbR$}\}.
\]
We note that the proof in \cite{ZlaGenfronts} (see also Section \ref{S6}) defined $X_n(t)$ to be the largest $x$ for which  $f(x,u)< \zeta u$ does not hold for all $u\in(0,u_n(t,x))$ (which is then smaller than our $X_n(t)$), but our definition will suffice here.  Also note that $X_n$ and $Y_n$ are both non-decreasing because $(u_n)_t\ge 0$, and we have $X_n(0)=X_0(0)-n$ and $Y_n(0)=Y_0(0)-n$.  Since $\tht_1''$ is smaller than any point of maximum of $\tfrac{f_0(u)}u$ (due to $\zeta<\max_{u\in(0,1]} \tfrac{f_0(u)}u$, which follows from $\zeta<\tfrac{c_0^2}4$) we obtain $\tht_1''<1-\eps_0$.  Hence $X_n(t)$ is finite, while $Y_n(t)$ is finite by  the following crucial lemma.

\begin{lemma} \lb{L.2.1}
(i) For any $n$ and $t\ge t'\ge 0$ we have
\beq\lb{2.6}
Y_n(t)-Y_n(t')\le c_\xi(t-t').
\eeq
If also $X_n(t)\le Y_n(t')$, then in fact 
\beq\lb{2.7}
Y_n(t)-Y_n(t')\le c_\zeta(t-t').
\eeq

(ii) For every $\eps>0$ there is $r_\eps<\infty$ such that for any $n$ and $t \ge t' \ge 0$ we have
\beq\lb{2.8}
\inf_{|x-X_n(t')|\le c_0(t-t')-r_\eps}  u_n(t,x)  \ge 1-\eps.
\eeq
This $r_\eps$ only depends on $\eps,f_0,f_1,K$.
\end{lemma}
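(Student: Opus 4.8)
The plan is to prove both parts by exhibiting explicit sub- and super-solutions of \eqref{1.1} and applying the comparison principle, combined (for (ii)) with the classical bistable/ignition spreading results of Aronson--Weinberger and Fife--McLeod for the homogeneous reaction $f_0$.

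For Part (i), the global bound \eqref{2.6} I would obtain from the exponential super-solution $\phi(s,x):=e^{-\sqrt\zeta\,(x-Y_n(t')-c_\xi(s-t'))}$. A direct computation gives $\phi_s-\phi_{xx}=(\sqrt\zeta\,c_\xi-\zeta)\phi=\xi\phi$, and since $f(x,u)\le f_1(u)\le\xi u$ on $(0,1]$ (with the convention $f(x,u):=0$ for $u>1$) this makes $\phi$ a super-solution of \eqref{1.1}. At time $t'$ we have $\phi(t',\cdot)=e^{-\sqrt\zeta(x-Y_n(t'))}\ge u_n(t',\cdot)$ by the definition of $Y_n(t')$, so comparison yields $u_n(s,x)\le\phi(s,x)$ and hence $Y_n(s)\le Y_n(t')+c_\xi(s-t')$; taking $s=t$ gives \eqref{2.6}. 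For the sharper bound \eqref{2.7} I replace $c_\xi$ by $c_\zeta=2\sqrt\zeta$, so that now $\phi_s-\phi_{xx}=\zeta\phi$. This $\phi$ is a super-solution only where $\phi\le\tht_1''$, since there $f(x,\phi)\le f_1(\phi)<\zeta\phi$ by \eqref{2.5}; I therefore run the comparison on $\Omega:=\{(s,x)\,|\,t'\le s\le t,\ x\ge X_n(s)\}$, where $u_n<\tht_1''$. Where $\phi>\tht_1''$ the inequality $u_n<\tht_1''<\phi$ is automatic, and where $\phi\le\tht_1''$ the function $\phi$ is a genuine super-solution; on the lateral boundary $x=X_n(s)$ the hypothesis $X_n(t)\le Y_n(t')$ and the monotonicity of $X_n$ give $X_n(s)-Y_n(t')-c_\zeta(s-t')\le0$, hence $\phi(s,X_n(s))\ge1>u_n$. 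Comparison then gives $u_n(t,x)\le\phi(t,x)$ for $x\ge X_n(t)$, while for $x<X_n(t)\le Y_n(t')$ the bound $\phi\ge1\ge u_n$ is trivial, so \eqref{2.7} follows.

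For Part (ii) I would exploit that $f\ge f_0$ makes $u_n$ a super-solution of $w_t=w_{xx}+f_0(w)$, and compare it from below with a spreading solution of that homogeneous equation. First, interior parabolic gradient estimates --- uniform in $n,t'$ because $0\le u_n\le1$ and $f$ is $K$-Lipschitz --- bound $|\partial_x u_n|$ by some $C=C(f_0,f_1,K)$; since $u_n(t',X_n(t'))=\tht_1''$ this yields $u_n(t',\cdot)\ge\tht^*:=\tfrac12(\tht_0+\tht_1'')$ on $I:=[X_n(t')-\delta_0,X_n(t')+\delta_0]$ with $\delta_0:=(\tht_1''-\tht_0)/(2C)$. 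Crucially, $(u_n)_t\ge0$ propagates this forward in time: $u_n(t,\cdot)\ge\tht^*$ on $I$ for all $t\ge t'$, so $I$ acts as a permanent ignition source of height $\tht^*>\tht_0$. Comparing $u_n$ on each of the two half-lines bordering $I$ with the solution of the $f_0$-equation carrying the constant datum $\tht^*$ on $\partial I$, the classical spreading results \cite{AW2,FM} (using $\int_0^1 f_0>0$ and uniqueness of the front speed $c_0$) show that this source ignites a pair of fronts diverging at speed $c_0$, exceeding $1-\eps$ on $\{\,|x-X_n(t')|\le c_0(t-t')-r_\eps\,\}$. Translation invariance of the homogeneous problem lets me centre the construction at the variable point $X_n(t')$, and since $\tht^*,\delta_0,f_0$ are fixed the lag $r_\eps$ depends only on $\eps,f_0,f_1,K$; as $u_n$ dominates this spreading solution, \eqref{2.8} follows.

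The main obstacle is precisely this uniformity in Part (ii): producing a lag $r_\eps$ independent of $n$ and $t'$. This is where the monotonicity $(u_n)_t\ge0$ is essential, since it upgrades the thin bump of width $2\delta_0$ from the gradient estimate --- which for a genuinely bistable $f_0$ would be too narrow to ignite on its own --- into a source maintained for all later times, so that ignition and the selection of the speed $c_0$ become a fixed, reaction-only feature. Verifying that a maintained super-threshold source produces two fronts travelling at the sharp speed $c_0$ with a quantitative, source-only lag is the technical heart; I expect to extract it from the Fife--McLeod convergence-to-front estimates, using that the rightmost-crossing definition of $X_n$ guarantees $u_n<\tht_1''$ ahead of the front, so that the relevant right-moving front of the comparison solution is the one that is tracked.
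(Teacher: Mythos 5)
Your part (i) is correct and is essentially the paper's own argument: the paper phrases the proof of \eqref{2.7} by saying $u_n$ is a sub-solution of the linear equation $w_t=w_{xx}+\zeta w$ on the fixed cylinder $(t',t)\times(X_n(t),\infty)$ (with your $\phi$ an exact solution of it), which is the same comparison you run on your moving domain.

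Part (ii), however, has a genuine gap, and it is exactly the step you flag as the ``technical heart.'' Your plan is that the thin interval $I$ of width $2\delta_0$, on which $u_n\ge\tht^*:=\tfrac12(\tht_0+\tht_1'')$ for all $t\ge t'$, acts as a permanent Dirichlet source which forces the homogeneous $f_0$-equation to spread at speed $c_0$. This is false for bistable $f_0$, and permanence in time does not repair it. Let $\tht_0'\in(\tht_0,1)$ be defined by $\int_0^{\tht_0'}f_0(u)\,du=0$; nothing in your construction prevents $\tht^*<\tht_0'$ (the level $\tht_1''$ bears no forced relation to $\tht_0'$, and $\tht^*<\tht_1''$ in any case). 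When $\tht^*<\tht_0'$ we have $F_0(u)<0$ on $(0,\tht_0')$, where $F_0(u)=\int_0^u f_0(s)\,ds$, so the ODE $W'=-\sqrt{-2F_0(W)}$ with $W(0)=\tht^*$ defines a decreasing steady state of $W''+f_0(W)=0$ on $[0,\infty)$ with $W(+\infty)=0$. The comparison solution $v$ you propose (solving $v_t=v_{xx}+f_0(v)$ on the half-line to the right of $I$, with boundary datum $\tht^*$ and initial datum below $W$) then satisfies $v\le W$ for \emph{all} time by the comparison principle: it is blocked forever, no matter how long the boundary source is maintained, because the blocking profile $W$ is time-independent. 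So the claimed ``pair of fronts diverging at speed $c_0$'' does not exist, and no Fife--McLeod estimate can produce it. (For ignition-type $f_0$ no such decaying steady state exists, which is precisely why this kind of shortcut works in the ignition literature but not in the bistable case treated here.)

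The paper's route is different, and it is where the quantitative hypothesis of Theorem \ref{T.1.1}(i) enters. It first shows $u_n(t',\cdot)\ge\tht_1$ on the entire half-line $(-\infty,X_n(t')]$ (connectedness of the superlevel set $\{u_n\ge\tht_1\}$, using $f(x,\tht_1)\le0$ and $(u_n)_t\ge0$), and then proves the intermediate claim \eqref{2.9}: within a time $T$ uniform in $n,t',f$, the solution $u_n$ exceeds $\tht_1''$ on $[X_n(t')-L,X_n(t')+L]$ for an \emph{arbitrary prescribed} $L$; only then are the wide-block spreading results \cite{AW2,FM} invoked. The proof of \eqref{2.9} is by compactness and contradiction: a limit of translates yields a steady state $w''+f_0(w)\le0$ with $w\ge\tht_1\chi_{\bbR^-}$ and $w(0)\ge\tht_1''$, and a phase-plane/energy analysis using $\int_{\tht_1}^{\tht_1''}f_0(u)\,du>0$ --- i.e.\ precisely $\tht_1''>\tht_1'$, which is what the hypothesis $f_1(u)<\tfrac{c_0^2}{4}u$ on $(0,\tht_1']$ buys --- forces $w\equiv1$, a contradiction. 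In other words, the integral condition is exactly the mechanism that excludes blocking profiles of the kind described above, lying above $\tht_1\chi_{\bbR^-}$ but below $1$. Your argument never uses this condition (you use $\tht_1''$ only as a level set marker), which is a strong warning sign: Theorem \ref{T.1.1}(iii) shows that for bistable reactions some such hypothesis is indispensable, so an argument for the bistable case that bypasses it entirely cannot be completed.
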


\begin{proof}
(i)  The first claim follows from $e^{-\sqrt\zeta(x-Y_n(t')-c_\xi(t-t'))}$ being a super-solution of \eqref{1.1}.  The second claim follows from $w(t,x):=e^{-\sqrt\zeta(x-Y_n(t')-c_\zeta(t-t'))}$ satisfying $w_t=w_{xx}+\zeta w$, while $u_n$ is a sub-solution of this PDE on $(t',t)\times(X_n(t),\infty)$ due to \eqref{2.5}, the definition of $X_n$, and due to $X_n$ being non-decreasing (note that $w\ge 1 >u_n$ on $(t',t)\times(-\infty,X_n(t)]$ because $X_n(t)\le Y_n(t')$).  

(ii)  Note that \eqref{2.8} will follow from $f(x,u)\ge f_0(u)$ and well-known spreading results (i.e., spreading with speed $c_0$ for $u_t=u_{xx}+f_0(u)$ \cite{AW2,FM}) once we show for 
each $L<\infty$ existence of $T<\infty$ (depending on $L,\tht_1'',f_0,f_1,K$) such that under the hypotheses of (ii) we have
\beq\lb{2.9}
\inf_{|x-X_n(t')|\le L}  u_n(t'+T,x)  \ge \tht_1''.
\eeq
(Here $\tht_1''$ can be replaced by any constant larger than $\tht_1$.  Moreover, an $L$ such that \eqref{2.9} indeed implies \eqref{2.8} only depends on $\tht_1'',f_0$, while $\tht_1''$  only depends on $f_0,f_1$.  Hence $r_\eps$ will only depend on $\eps,f_0,f_1,K$.)  We will now prove \eqref{2.9} for any fixed $L$.

First we claim that $u_n(t',x)\ge \tht_1$ for $x\le  X_n(t')$.  This is because $f(x,\tht_1)\le 0$, so the set $I_n(t):=\{x\,|\, u_n(t,x)\ge \tht_1\}$ cannot acquire new connected components due to the maximum principle, and because $(u_n)_t\ge 0$,  so $I_n(t)$ cannot split into several connected components either.  Since $I_n(0)$ is some interval $(-\infty,\iota-n]$, it follows that $I_n(t)$ is some interval $(-\infty,\iota_n(t)]$.

Assume now that \eqref{2.9} does not hold for some $L$.  Then for each $k\in\bbN$ we can find $n_k$ and  $(t'_k,x_k)\in [0,\infty)\times[-L,L]$ such that $u_{n_k}(t'_k+k, X_{n_k}(t'_k)+x_k)<\tht_1''$.  Then each $ w_k(t,x):= u_{n_k}(t+t'_k,x+X_{n_k}(t'_k))$ satisfies \eqref{1.1} on $\bbR^+\times\bbR$, with $f$ replaced by $g_k(x,u):=f(x+X_{n_k}(t'_k),u)$.  Parabolic regularity, $f_0\le f\le f_1$, and $f$ being $K$-Lipschitz show that a subsequence of $w_k$ converges in $C^{1,2}_{\rm loc}(\bbR^+\times\bbR)$ to some solution $\til w$ of \eqref{1.1}, with $f$ replaced by some $K$-Lipschitz $g$ such that $f_0\le g\le f_1$.  Moreover, $\til w_t>0$, $\til w(0,\cdot)\ge \tht_1 \chi_{\bbR^-}$, $\til w(0,0)\ge \tht_1''$, and $w(x):=\lim_{t\to\infty} \til w(t,x)$ solves $w''+g(x,w)=0$ on $\bbR$ and  satisfies $w(x_0)\le \tht_1''$ for some $|x_0|\le L$.  Since $w\le 1$, it also follows that $w<1$.

We thus obtain $w''+f_0(w)\le 0$ and $ \tht_1 \chi_{\bbR^-}\le w<1$ as well as $w(0)\in[ \tht_1'',1)$.  Multiplying the former by $w'$ and integrating over $(a,0)$, with $a\in[-\infty,0)$ smallest such that $w'$ does not change sign on $(a,0)$ (hence $w'(a)=0$) yields
\[
\sgn(w(0)-w(a)) \left [\frac {w'(0)^2}2+F_0(w(0))- F_0(w(a)) \right] \le 0.
\]
From $w(0)\ge \tht_1''$, $w(a)\ge \tht_1$, and $\int_{\tht_1}^{\tht_1''} f_0(u)du>0$ we obtain 
 \[
 \sgn(F_0(w(0))-F_0(w(a)))=\sgn(w(0)-w(a)),
 \]
so we must have $w(a)\ge w(0)$.  

If $a>-\infty$, we let $a'\in[-\infty,a)$ be smallest such that $w'$ does not change sign on $(a',a)$, and the same argument yields
\[
\sgn(w(a)-w(a'))  [F_0(w(a))- F_0(w(a'))] \le 0.
\]
Since $w(a)\in[\tht_1'',1)$,  $w(a')\in[\tht_1,1]$,  and $\int_{\tht_1}^{\tht_1''} f_0(u)du>0$, we see that this is only possible if $w(a')=w(a)$.  But then $a'=-\infty$ and $w\equiv w(a)$ on $(-\infty,a)$, a contradiction with $w''+f_0(w)\le 0$ because $f_0>0$ on $[\tht_1'',1)$.

If now $a=-\infty$, we must have $f_0(w(-\infty))\le 0$ (and $w(-\infty)\ge \tht_1''$) which leaves us with $w(-\infty)=1$.  Running the above argument on $(-\infty,b)$, with $b\in[0,\infty]$ largest such that $w'$ does not change sign on $(-\infty,b)$, then yields
\[
\sgn(w(b)-1)  [F_0(w(b))- F_0(1)] \le 0.
\]
The properties of $f_0$ now force $w(b)=1$.  Hence $b=\infty$ and $w\equiv 1$, a contradiction with $w(x_0)\le \tht_1''$.

This proves \eqref{2.9}.
Notice that the $T$ we obtained  is independent of $f$ because the contradiction argument can be run uniformly in all $f$ from (H) (we pick $\{(f_k,n_k,t_k',x_k)\}_{k=1}^\infty$ instead of  $\{(n_k,t_k',x_k)\}_{k=1}^\infty$).  Thus $T=T(L,\tht_1'',f_0,f_1,K)$
and as mentioned above, it follows that $r_\eps$ depends only on $\eps,f_0,f_1,K$.  
\end{proof}

Having proven the lemma, we now easily obtain
\beq\lb{2.10}
 \sup_{n\in\bbN\,\&\,t\ge 0} |Y_n(t)-X_n(t)|\le C
\eeq
for some $C=C(f_0,f_1,K)$.  The uniform bound $ X_n(t)-Y_n(t)\le C(f_0,f_1)$ is obvious from the definition of $X_n,Y_n$ (since $\eps_0,\tht_1'',\zeta$ only depend on $f_0,f_1$), so we are left with proving $ Y_n(t)-X_n(t)\le C(f_0,f_1,K)$.  

Note that the claims of Lemma \ref{L.2.1}(i) together prove
\beq\lb{2.11}
Y_n(t)-Y_n(t') \le c_\zeta(t-t') \qquad\text{when $Y_n(t)-X_n(t)\ge c_\xi(t-t')$},
\eeq
and Lemma \ref{L.2.1}(ii) shows
\beq\lb{2.12}
X_n(t)-X_n(t') \ge c_0(t-t')-r_{\eps_0}
\eeq
(recall that we have $1-\eps_0\ge \tht_1''$).  Let $S:=|Y_n(0)-X_n(0)|$ (which is independent of $n,f$) and $C:=S+c_\xi r_{\eps_0}(c_0-c_\zeta)^{-1} $.  If $t\ge 0$ is the first time such that $Y_n(t)-X_n(t)=C$ (note that $Y_n(t)-X_n(t)$ is lower semi-continuous because so is $X_n$, and $Y_n$ is continuous), then $t\ge r_{\eps_0}(c_0-c_\zeta)^{-1}$ by Lemma \ref{L.2.1}(i) and we let $t':=t-r_{\eps_0}(c_0-c_\zeta)^{-1}$.  But now \eqref{2.11} and \eqref{2.12} yield $X_n(t)-X_n(t')\ge Y_n(t)-Y_n(t')$, a contradiction with the choice of $t$.  This proves \eqref{2.10}.

Finally, let us define
\begin{align*}
Z_{n,\eps}^-(t) & := \max \{ y\in\bbR \, |\, u_n(t,x)> 1-\eps \text{ for all $x< y$} \}, \\
Z_{n,\eps}^+(t) & := \min \{ y\in\bbR \, |\, u_n(t,x)< \eps \text{ for all $x> y$} \}.
\end{align*}
Continuity of $Y_n$ and \eqref{2.10} show that the non-decreasing function $X_n$ can have jumps no longer than $2C$.  This, $(u_n)_t\ge 0$, and Lemma \ref{L.2.1}(ii) (together with $X_n(0)\le -n$ and $u_n(0,x)\ge 1-\eps_0>\tht_1''$ for $x\le -n-r$; see the construction of the initial data $v$ above), imply that there is $T_\eps$ such that $Z_{n,\eps}^-(t+T_\eps)\ge X_n(t)$ for any $t\ge 0$, and $T_\eps$ depends only on $\eps,f_0,f_1,K$.  From the definition of $Y_n$ and Lemma \ref{L.2.1}(i) we also have
\[
Z_{n,\eps}^+(t+T_\eps) \le Y_n(t+T_\eps)+ \zeta^{-1/2}|\log\eps| \le Y_n(t)+c_\xi T_\eps+ \zeta^{-1/2}|\log\eps|,
\]
so \eqref{2.10} allows us to conclude for each $t\ge 0$,
\beq\lb{2.13}
Z_{n,\eps}^+(t+T_\eps) - Z_{n,\eps}^-(t+T_\eps)\le c_\xi T_\eps + \zeta^{-1/2}|\log\eps| + C \qquad (=:L_\eps).
\eeq
But this is precisely \eqref{2.4}, and the proof is finished.  

Notice that this also shows that the upper bound $L_\eps$ on the left-hand side of \eqref{2.4} only depends on $\eps,f_0,f_1,K$, so as claimed in Remark 3 after Theorem \ref{T.1.1}, the limits in \eqref{1.3} are indeed uniform in all $f$ satisfying (H) with some fixed $f_0,f_1,K$.

Notice also that so far we used neither $\tht>0$ nor $\tht_1>0$.  Hence existence of fronts extends to mixed bistable-ignition-monostable reactions.

\vskip 4mm
\noindent
{\bf Uniqueness of the front and convergence of typical solutions to it}
\vskip 3mm

As mentioned above, these proofs are essentially identical to their analogs in \cite[Sections~3 and 4]{ZlaGenfronts}.  We only sketch them and refer the reader to \cite{ZlaGenfronts} for any  details skipped here.

Replace $\eps_0$ from the existence proof by the minimum of itself and $\tfrac\tht 2$ (hence it now depends on $f_0,\tht$).
Then let $v$ and $u:=u_0$ be from the existence proof (i.e., $u$ solves \eqref{1.1} with $u(0,\cdot)=v$), and let
\begin{align*}
X_u(t):= & \max \{x \in\bbR \,|\, u(t,x)\ge\tht_1''\},
\\ Y_u(t):= & \min \{y \in\bbR \,|\, u(t,x)\le e^{-\sqrt\zeta(x-y)} \text{ for all $x\in \bbR$}\},
\\ Z_{u,\eps}^-(t)  := & \max \{ y\in\bbR \, |\, u(t,x)> 1-\eps \text{ for all $x< y$} \},
\\ Z_{u,\eps}^+(t) := & \min \{ y\in\bbR \, |\, u(t,x)< \eps \text{ for all $x> y$} \}
\end{align*}
for $t\ge 0$.  We also define
\[
Z_u(t) :=  Z_{u,\eps_0}^-(t),
\]
and note that \eqref{2.10}, Lemma \ref{2.1}, and $Z_{n,\eps}^-(t+T_\eps)\ge X_n(t)$ proved above show
\beq\lb{2.21}
\sup_{t\ge T_{\eps_0}} |Y_u(t)-Z_u(t)|\le C_2,
\eeq
with $C_2=C_2(f_0,f_1,K,\tht)$ and $T_\eps=T_\eps(f_0,f_1,K)$.

Let now $0\le w\le 1$ be any transition front for \eqref{1.1}, define $X_w(t),Y_w(t),Z_{w,\eps}^-(t),Z_{w,\eps}^+(t),Z_w(t)$ as above but with $w$ in place of $u$ and for any $t\in\bbR$ (here $Y_w(t)$ might, in principle, be $\infty$).  Also define 
\[
L_w:=\sup_{t\in\bbR} \left\{ Z_{w,\eps_0}^+(t) -Z_{w,\eps_0}^-(t) \right\},
\]
which is finite because $w$ is a transition front.

First, \cite[Lemma 3.1]{ZlaGenfronts} shows 
\beq\lb{2.22}
\sup_{t\in \bbR} |Y_w(t)-Z_w(t)|\le \til C_2,
\eeq
with $\til C_2$ depending on $f_0,f_1,K,\tht$ (and also on $L_w$ if $w_t\not>0$).  Consider first the case $w_t>0$.  Then \eqref{2.22} is obtained by letting for $h>0$,
\[
Y_{w,h}(t):= \min \{y \in\bbR \,|\, w(t,x)\le h+e^{-\sqrt\zeta(x-y)} \text{ for all $x\in \bbR$}\} <\infty,
\]
and proving for all small $h>0$,
\beq\lb{2.23}
 \sup_{t\in\bbR} |Y_{w,h}(t)-X_w(t)|\le C_2(f_0,f_1,K)
\eeq
(then we take $h\to 0$, and afterwards conclude \eqref{2.22} as we did \eqref{2.21}).  Finally, \eqref{2.23} is obtained as in the existence proof, using Lemma \ref{L.2.1} for $X_w,Y_{w,h}$ and any $t\ge t'$, which holds for any $h>0$ such that
\beq\lb{2.24}
f_1(u)<  \zeta (u-h) \qquad \text{ for $u\in(h,\tht_1'']$}.
\eeq
This is true for all small enough $h>0$ due to \eqref{2.5}.

If now $w_t\not>0$ and $h>0$ is small enough, then Lemma \ref{L.2.1}(i) holds for $X_w,Y_{w,h}$  and $t\ge t'$, but with $X_w(t)\le Y_{w,h}(t')$ replaced by $\sup_{t'\le s\le t} X_w(s)\le Y_{w,h}(t')$.  Also, Lemma \ref{L.2.1}(ii) easily holds with \eqref{2.8} replaced by
\beq\lb{2.25}
\inf_{x\le X_w(t')-L_w+ c_0(t-t')-r_\eps}  w(t,x)  \ge 1-\eps.
\eeq
This is because $\eps_0<\tht_0\le \tht_1''<1-\eps_0$, so  \eqref{2.9} can be replaced in the proof by the obvious
\[
\inf_{x\le X_w(t')- L_w}  w(t',x)  \ge \tht_1''.
\]
This version of Lemma \ref{L.2.1} yields \eqref{2.23}, with $C_2$ also depending on $L_w$.

Next, \cite[Lemma 3.2]{ZlaGenfronts} shows that for each $\eps>0$ there is $\delta>0$, depending also on $f_0,f_1,K,\tht$ (and also on $L_w$ if $w_t\not>0$), such that the following holds for any $t_0\ge 1$, $t_1\in\bbR$, and $t\ge t_0$:
\beq\lb{2.26}
\text{if $\pm[w(t_1,\cdot)-u(t_0,\cdot)]\le\delta$, then $\pm[w(t+t_1-t_0,\cdot)-u(t,\cdot)]\le\eps$.}
\eeq
Of course, $u,w$ are the particular solutions of \eqref{1.1} considered here (in particular, one needs to use that they decay exponentially  as $x\to\infty$). The proof of the $+$ case (without loss assume $t_1=t_0$, otherwise shift $w$ in time) is via the construction of a super-solution of \eqref{1.1} of the form
\beq\lb{2.27}
z_+(t,x):= u \left(t+ \frac\eps\Omega \left(1-e^{-\sqrt\zeta(c_0-c_\zeta)(t-t_0)/4} \right) ,x \right) + b_\eps e^{-\sqrt\zeta(x-Y_w(t_0)-c_\zeta(t-t_0))/2},
\eeq
with $\Omega$ large so that $|u_t|\le\Omega$ for $t\ge 1$ (such $\Omega=\Omega(K)$ exists by parabolic regularity) and $b_\eps>0$ small and depending also on $f_0,f_1,K,\tht$ (and also on $L_w$ if $w_t\not>0$).  That such $b_\eps$ exists follows from $u_t>0$, the strong maximum principle for $u_t$, and (recall that $\eps_0\le\tfrac\tht 2$) $\sup_{t\ge 1} \{Z_{u,\tht/2}^+(t)-Z_{u,\tht/2}^-(t)\}<\infty$ --- which together show that $u_t(t,x)$ is uniformly positive where $u(t,x)\in[\tfrac\tht 2,1-\tfrac\tht 2]$ --- as well as from $f$ being non-increasing in $u$ on $[0,\tht]$ and on $[1-\tht,1]$ (we also let $f(x,u)\le 0$ for $u> 1$).  Note that a crucial property of $z_+$ is that the second term travels with speed $c_\zeta$, which is smaller than the lower bound $c_0$ on the speed of propagation of the first term.  Hence $z_+(t,\cdot+x_t)-u(t+\tfrac\eps\Omega,\cdot+x_t)$, with $x_t:=\max \{x\in\bbR\,|\, u(t,x)=\tfrac 12\}$, converges locally uniformly to 0 as $t\to\infty$.  A simple argument \cite{ZlaGenfronts} then concludes the $+$ case of \eqref{2.26} with $\delta$ depending on $b_\eps$ (specifically, $\delta=b_{\eps}^2$ for that particular choice of $b_\eps$, and then one obtains \eqref{2.26} with $2\eps$ instead of $\eps$).

The proof of the $-$ case of \eqref{2.26} is similar, using the sub-solution
\beq\lb{2.28}
z_-(t,x):= u \left(t- \frac\eps\Omega \left(1-e^{-\sqrt\zeta(c_0-c_\zeta)(t-t_0)/4} \right) ,x \right) - b_\eps e^{-\sqrt\zeta(x-Y_u(t_0)-c_\zeta(t-t_0))/2}
\eeq
as well as Lemma \ref{L.2.1}(ii) for $w$ (the latter is needed because $\lim_{x\to-\infty} z_-(t,x)=-\infty$).

These estimates now easily show (see \cite[Lemma 3.3]{ZlaGenfronts}) that 
\beq\lb{2.29}
\tau_w:=\inf\{\tau \in\bbR \,|\, \liminf_{t\to\infty} \inf_{x\in\bbR}[w(t+\tau,x)-u(t,x)]\ge 0\}
\eeq
is a finite number, and hence also
\beq\lb{2.30}
\liminf_{t\to\infty} \inf_{x\in\bbR}[w(t+\tau_w,x)-u(t,x)]\ge 0.
\eeq
Then it is shown in  \cite[Lemma 3.4]{ZlaGenfronts} that in fact
\beq\lb{2.31}
\lim_{t\to\infty} \|w(t+\tau_w,\cdot)-u(t,\cdot)\|_{L^\infty}= 0.
\eeq
Indeed, if this were false, then \eqref{2.26}, \eqref{2.30}, \eqref{2.21}, \eqref{2.22}, and the strong maximum principle would imply
\[
\liminf_{t\to\infty} \inf_{|x-x_t|\le L} [w(t+\tau_w,x)-u(t,x)]>0
\]
for any $L<\infty$.  This, \eqref{2.26}, the definition of $\tau_w$, and $f$ being non-increasing in $u$ on $[0,\tht]$ and on $[1-\tht,1]$ can be shown to yield a contradiction (also using parabolic regularity).

Hence each transition front must converge in $L^\infty$ to some time-shift of $u$ as $t\to\infty$.  Finally, this convergence is shown in \cite[Lemma 3.5]{ZlaGenfronts} to be uniform in all $f$ satisfying (H) (for any fixed $f_0,f_1,K,\tht$) and all $w$ with $L_w\le C$ (for any fixed $C<\infty$).  Indeed, if this were not true, one could obtain a counter-example to \eqref{2.31} by passing to a subsequence of more and more slowly converging couples $u,w$ as above (each with its own $f$; this again uses parabolic regularity and $f$ being $K$-Lipschitz).

Since this uniformity includes any translations of $f$ in $x$, we obtain that if $w_1,w_2$ are two  transition fronts for $f$, the solutions $u_n$ of \eqref{1.1} with initial conditions $u_n(0,x):=v(x+n)$ converge uniformly quickly (in $n$) as $t\to\infty$ to some time translates (by $\tau_{1,n}$ and $\tau_{2,n}$) of $w_1,w_2$.  Obviously $\tau_{1,n},\tau_{2,n}\to-\infty$ as $n\to\infty$, which together with the stability result \eqref{2.26} shows that for any $t'\in\bbR$ and $\eps>0$, there is $\tau_{t',\eps}$ such that 
\[
\sup_{t\ge t'\,\&\,x\in\bbR}|w_1(t,x)-w_2(t+\tau_{t',\eps},x)|<\eps.
\]
Since $t'\in\bbR$ and $\eps>0$ are arbitrary, it follows that $w_1(\cdot,\cdot)\equiv w_2(\cdot-\tau,\cdot)$ for some $\tau\in\bbR$.  Thus there is a unique transition front (up to translation in $t$), which then must be the one constructed in the existence proof.  That front satisfies $w_t>0$ and has $L_w$ uniformly bounded in $f$ (for any fixed $f_0,f_1,K,\tht$), hence we find that, in fact, the constants in the above results do not depend on $L_w$.

This proves the uniqueness claim of Theorem \ref{T.1.1}(i).  The proof of the convergence claim for front-like solutions  is very similar to the uniqueness proof, but with $u$ now being the unique transition front, while $w$ being the front-like solution (so the notation from Definition \ref{D.1.0a} is reversed).  There are only two significant differences.  The first is that $Y_w$ must now be defined with  $\sqrt\zeta$ replaced by $\mu$ so that it is finite, and $\sqrt\zeta$ is replaced by $2\mu$ in \eqref{2.27}
(this uses $\mu\le\tfrac 12 \sqrt\zeta$, which can be assumed without loss).  The second is that while now we do not have the uniform limits \eqref{1.3}, we can instead easily bound $w$ from above by  the new super-solution \eqref{2.27} and from below by the original sub-solution \eqref{2.28} (with two different $t_0$).  This, \eqref{2.21} (with $t\in\bbR$), and \eqref{2.6} (which now holds with $c_\xi:=(K+\mu^2)\mu^{-1}$) then prove \eqref{2.22} (with $t\ge 0$), even though the crucial estimate \eqref{2.7} does not anymore hold for the new $Y_w$ and some $c_\zeta<c_0$.

The proof for spark-like solutions  is identical, but restricted to $x\in\bbR^+$ (and then to $x\in\bbR^-$ and the unique left-moving front).  Finally, the second claim in Remark 3 after Theorem \ref{T.1.1} is also proved as in \cite{ZlaGenfronts} --- if it were false, one could use parabolic regularity to construct a reaction satisfying the hypotheses but not the result on convergence of front-like or spark-like solutions to the transition fronts.

\section{Proof of Theorem \ref{T.1.1}(ii)} \lb{S6}

This is an immediate corollary of \cite[Theorem 1.3]{ZlaGenfronts}.   The latter is the same result for \eqref{2.1} and ignition reactions (see Definition \ref{D.1.0})  satisfying the following hypothesis (which we state here in the case of \eqref{1.1}, with $\tht_0$ from (H) and $c_0$ the unique front speed for $f_0$):

\smallskip
{\it There are $\zeta<\tfrac{c_0^2}4$ and $\eta>0$ such that 
\beq\lb{6.1}
\inf_{x\in\bbR \,\&\, u\in[\alpha_f(x),\tht_0]} \sup_{|y-x|\le\eta^{-1}} f(y,u)\ge\eta,
\qquad\text{where }
\alpha_f(x):= \inf\{u\in(0,1) \,|\, f(x,u)\ge \zeta u\}.
\eeq
}

This hypothesis  automatically holds for all pure ignition reactions (even without the $\sup$ and with $f(y,u)$ replaced by $f(x,u)$), with any $\zeta\in(0,\tfrac{c_0^2}4)$ and $\eta$ depending on $\zeta,\tht_1,K,\gamma$ (the latter from Definition \ref{D.1.0}).  We also note that in the proof of \cite[Theorem 1.3]{ZlaGenfronts}, $X_n$ is replaced by the smaller
\[
\tilde X_n(t):= \max \{x \in\bbR \,|\, u_n(t,x)\ge\alpha_f(x)\},
\]
that the extra hypothesis on $f_1$ from Theorem \ref{T.1.1}(i) can be replaced by \eqref{6.1}  thanks to the fact that any bounded solution to $0=u_{xx}+f(x,u)$ with $f\ge 0$  must be constant, 
and 
that the existence part of that result extends to mixed ignition-monostable reactions.

\section{Proof of Theorem \ref{T.1.1}(iii)} \lb{S3}


We start with a periodic stationary solution $p$ of \eqref{1.1} with $f=f_0$, where $f_0$ is any homogeneous pure bistable reaction.  It is well known that such solutions are obtained by solving the ODE $p''+f_0(p)=0$ on $\bbR$, with any $p(0)\in[\tht_0,\tht_0')$ and $p'(0)=0$, where $\tht_0'\in(\tht_0,1)$ is given by $\int_{0}^{\tht_0'} f_0(u)du=0$.  It is easy to show (by multiplying the ODE by $p'$ and integrating on any interval where $p'$ does not change sign) that $p(\bbR)=[P,p(0)]$, where $\int_P^{p(0)} f_0(u)du=0$.
We pick $p(0)=\tfrac 14(\tht_0+3\tht_0')$ and denote $M$ the period of the corresponding solution $p$.  Next we let $m>0$ be such that $p\ge \tfrac 14(2\tht_0+2\tht_0')$ on $[-m,m]$.  We let $\kappa$ be a Lipschitz constant for $f_0$, and for any $\delta>0$ let $a\in(0,\tht_0)$ be such that if $w_t=w_{xx}$ and $w(0,\cdot)\ge \tht_0 \chi_{(-m,m)}$, then $w(\delta,\cdot)\ge a e^{\kappa\delta} \chi_{(-m-M,m+M)}$.  This means that whenever $u$ solves \eqref{1.1} with $f\ge f_0$ and $u(t',\cdot)\ge \tht_0 \chi_{(A-m,A+m)}$ for some $A\in\bbR$, then 
\beq\lb{3.1}
u(t'+\delta,\cdot)\ge a  \chi_{(A-m-M,A+m+M)}.
\eeq

Next, for any given $K<\infty$ we pick any (Lipschitz) even-in-$x$ pure bistable $f\ge f_0$ such that $f(x,u)=f_0(u)$ when $u\notin (\tfrac a 2,p(x))$ and 
\[
f(x,u)=f_0(u)+K \dist\left(u,\left\{\frac a 2,\frac{3\tht_0+\tht_0'}4\right\}\right)
\]
when $|x-nM|\le m$ for some $n\in\bbZ$ and $u\in[\tfrac a 2,\tfrac14(3\tht_0+\tht_0')]$.  If $K$ is large enough, this can be done so that $f$ is indeed pure bistable.
It is now clear from \eqref{3.1} that if $u$ solves \eqref{1.1} and $u(t',\cdot)\ge \tht_0 \chi_{(nM-m,nM+m)}$ for some $n\in\bbZ$, then we have
\[
u(t'+2\delta,\cdot)\ge \tht_0  \chi_{((n-1)M-m,(n+1)M+m)}
\]
provided $K$ is large enough.  This immediately yields for such $K$ and $j=1,2,\dots$,
\beq\lb{3.2}
u(t'+2j\delta,\cdot)\ge \tht_0  \chi_{((n-j)M-m,(n+j)M+m)}.
\eeq

We now pick any $ \delta>0$ such that $4\delta\sqrt\kappa<M$, then $K$ as above (so that \eqref{3.2} holds) and fix the corresponding $f$.  If $u$ is any transition front for \eqref{1.1} (we only need to consider right-moving ones because $f$ is even in $x$), we have $u(0,\cdot)\ge \tht_0 \chi_{(nM-m,nM+m)}$ for some $n\in\bbZ$.  From \eqref{3.2} we then get for $j=1,2,\dots$,
\beq\lb{3.3}
u(2j\delta,\cdot)\ge \tht_0 \chi_{((n-j)M-m,(n+j)M+m)}.
\eeq

On the other hand, we have
\beq\lb{3.4a}
u(t,x)\le w(t,x):=p(x) + e^{-\sqrt{\kappa}(x-A-2\sqrt\kappa t)}
\eeq
for some large $A<\infty$ and all $(t,x)\in\bbR^+\times\bbR$.  This is true for $t=0$ because $u(0,\cdot)$ is bounded and $\lim_{x\to\infty} u(0,x)=0<P$, and then it holds for $t>0$ because  $w$ is a super-solution of \eqref{1.1} (recall that $f(x,u)=f_0(u)$ for $u\ge p(x)$ and $\kappa$ is a Lipschitz constant for $f_0$):
\[
w_t-w_{xx} -f(x,w) =f_0(p(x))-f_0(w)+ \kappa e^{-\sqrt{\kappa}(x-A-2\sqrt\kappa t)} \ge 0.
\]
This means that we have (in fact, for any solution of \eqref{1.1} with $\limsup_{x\to\infty} u(0,x)<P$, and some large enough $A$)
\beq\lb{3.4}
u \left( t, 2\sqrt\kappa \,t + A + \frac 1\kappa  \log \frac 2{1-p(0)} \right) \le \frac{1+p(0)}2.
\eeq

This and \eqref{3.3} now show for $\eps_0:= \min \{\tht_0,\tfrac{1-p(0)}2  \}$ and  $j=1,2,\dots$ that $u(2j\delta,\cdot)$ takes values within $[\eps_0,1-\eps_0]$ on some interval of length
\[
(M-4\delta\sqrt\kappa)j +nM -  A - \frac 1\kappa  \log \frac 2 {1-p(0)}.
\]
Since  $4\delta\sqrt\kappa<M$, it follows that
\eqref{1.1} with this pure bistable $f$ does not have any transition fronts (connecting 0 and 1).

The second claim
is proved identically.  Indeed, in the above argument we only needed  that $u(t',\cdot)\ge \tht_0\chi_{(-m,m)}$ for some $t'\in\bbR$ (we can pick $\tht_0=\tfrac 12$),  and $\limsup_{x\to\infty} u(t',x)<P$.

\section{Proof of Theorem \ref{T.1.1a}(i)} \lb{S4}

This proof is similar to the one of Theorem \ref{T.1.1}(i), with space-shifts replacing time-shifts at various points.  Its existence part is slightly different, while the other two parts are essentially identical.

\vskip 4mm
\noindent
{\bf Existence of a front}
\vskip 3mm

We again let $u_n$ solve \eqref{1.1a}, but this time with initial condition $u_n(-n,x)=v(x)$ (where $\eps_0,v$ are from the existence part of the proof of Theorem \ref{T.1.1}(i)).  We then let $\xi_n$ be maximal such that $u_n(0,\xi_n)=\tfrac 12$ (from $f\ge f_0$ we have $\lim_{n\to\infty}\xi_n=\infty$) and define $\til u_n(t,x):=u_n(t,x+\xi_n)$.  We again recover our candidate for a front $w$ (with $w(0,0)=\tfrac 12$) as a limit of a subsequence of these $\til u_n$, and it remains to prove \eqref{2.4} with $t\ge -n+T_\eps$ instead of $t\ge T_\eps$.  Note that now $(u_n)_x\le 0$, so this time $w_x<0$ will also follow.

We now pick $\zeta<\tfrac{c_0^2}4$ and $\tht_1''>\tht_0$ so that \eqref{2.5} holds, and again let $c_\zeta:=2\sqrt\zeta$ and  $c_\xi:= (\xi+\zeta)\zeta^{-1/2}$ (recall that $\xi:=\max_{u\in(0,1]} \tfrac{f_1(u)}u$).  We then take for $t\ge -n$,
\beq\lb{4.1}
X_n(t):= \max \{x \in\bbR \,|\, u_n(s,x)\ge\tht_1'' \text{ for some $s\in[-n,t]$}\},
\eeq
\beq\lb{4.2}
Y_n(t):= \min \{y \in\bbR \,|\, u_n(s,x)\le e^{-\sqrt\zeta(x-y)} \text{ for all $(s,x)\in [-n,t]\times\bbR$}\}.
\eeq
The crucial lemma is now the following.

\begin{lemma} \lb{L.4.1}
(i) Lemma \ref{L.2.1}(i) holds for any $n$ and $t\ge t'\ge -n$.

(ii) For every $\eps>0$ there is $r_\eps<\infty$ such that for any $n$ and $t \ge t' \ge -n$ we have
\beq\lb{4.3}
\inf_{x\le X_n(t')+ c_0(t-t')-r_\eps}  u_n(t,x)  \ge 1-\eps.
\eeq
This $r_\eps$ only depends on $\eps,f_0,f_1$.
\end{lemma}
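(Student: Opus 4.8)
The plan is to follow the proof of Lemma \ref{L.2.1}, exploiting two structural features special to \eqref{1.1a}. First, the modified definitions \eqref{4.1} and \eqref{4.2} make $X_n$ and $Y_n$ non-decreasing in $t$ \emph{by construction}, since their defining conditions quantify over all $s\in[-n,t]$; this automatically supplies the monotonicity of $X_n$ that was used as an input in the spatial case. Second, the $x$-independence of the reaction in \eqref{1.1a} forces the spatial monotonicity $(u_n)_x\le 0$: because $v$ is non-increasing and \eqref{1.1a} is invariant under translations in $x$, comparing $u_n$ with its translates $u_n(t,\cdot+h)$ and applying the maximum principle gives $u_n(t,\cdot+h)\ge u_n(t,\cdot)$ for every $h\le 0$. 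Finiteness of $X_n(t),Y_n(t)$ is inherited from the super-solution bound exactly as in the existence proof (using $\tht_1''<1-\eps_0$).

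For part (i) I would reuse the two exponential comparison functions from the proof of Lemma \ref{L.2.1}(i). The function $e^{-\sqrt\zeta(x-Y_n(t')-c_\xi(t-t'))}$ is a super-solution of \eqref{1.1a}, since $c_\xi=(\xi+\zeta)\zeta^{-1/2}$ gives $w_t-w_{xx}=\xi w\ge f_1(w)\ge f(t,w)$, and it dominates $u_n(t',\cdot)$ by the definition \eqref{4.2}; likewise $e^{-\sqrt\zeta(x-Y_n(t')-c_\zeta(t-t'))}$ solves $w_t=w_{xx}+\zeta w$, of which $u_n$ is a sub-solution on $(t',t)\times(X_n(t),\infty)$ by \eqref{2.5} and \eqref{4.1}. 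The one point requiring care, and the very reason for the $s$-ranges in \eqref{4.1}--\eqref{4.2}, is that to extract a bound on $Y_n(t)$ I must check the exponential dominates $u_n(s,\cdot)$ for \emph{every} $s\in[-n,t]$, not only $s=t$: this splits into $s\le t'$ (where the bound at $Y_n(t')$ already suffices because $c_\xi,c_\zeta\ge 0$) and $s\in(t',t]$ (where the comparison propagates it forward), and in the sub-solution case one also invokes $X_n(t)\le Y_n(t')$ to guarantee $w\ge 1\ge u_n$ on $(t',t)\times(-\infty,X_n(t)]$. Both \eqref{2.6} and \eqref{2.7} then follow as before.

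For part (ii) the spatial monotonicity $(u_n)_x\le 0$ lets me bypass the delicate stationary-solution contradiction argument of the spatial case, which is precisely why $r_\eps$ will depend only on $\eps,f_0,f_1$ and not on $K$. By \eqref{4.1}, closedness of $\{(s,x):s\in[-n,t'],\,u_n(s,x)\ge\tht_1''\}$, compactness of $[-n,t']$, and the uniform decay of $u_n(s,\cdot)$ as $x\to\infty$, the maximum is attained: there is $s^*\in[-n,t']$ with $u_n(s^*,X_n(t'))=\tht_1''$, whence $(u_n)_x\le 0$ yields $u_n(s^*,\cdot)\ge\tht_1''\chi_{(-\infty,X_n(t')]}$. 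Since $f\ge f_0$ and $\tht_1''>\tht_0$, the classical front-spreading results for $u_t=u_{xx}+f_0(u)$ with speed $c_0$ \cite{AW2,FM}, applied from the initial time $s^*$, give $u_n(t,x)\ge 1-\eps$ for all $x\le X_n(t')+c_0(t-s^*)-r_\eps$, with $r_\eps=r_\eps(\eps,f_0,f_1)$.

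The main obstacle is the temporal recession encoded in \eqref{4.1}: because \eqref{1.1a} is not monotone in $t$, the witness $s^*$ realizing $X_n(t')$ may lie strictly before $t'$, so I cannot claim that $u_n(t',\cdot)$ is large near $X_n(t')$. The resolution is to run the spreading estimate from $s^*$ rather than $t'$ and use $t-s^*\ge t-t'$, which upgrades $X_n(t')+c_0(t-s^*)-r_\eps$ to the claimed range $X_n(t')+c_0(t-t')-r_\eps$ in \eqref{4.3}. The infimum over the half-line in \eqref{4.3} is then immediate, since behind the spreading front the solution is within $\eps$ of $1$ and, in any case, $(u_n)_x\le 0$ makes the right endpoint $x=X_n(t')+c_0(t-t')-r_\eps$ the worst point.
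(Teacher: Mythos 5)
Your proposal is correct and takes essentially the same approach as the paper: part (i) is the same comparison-function argument as Lemma \ref{L.2.1}(i) (with the extra but routine bookkeeping over $s\in[-n,t]$ forced by \eqref{4.1}--\eqref{4.2}), and part (ii) uses exactly the three ingredients the paper cites --- the spreading results of \cite{AW2,FM}, $\tht_1''>\tht_0$, and $(u_n)_x\le 0$. Your witness-time argument (running the spreading estimate from $s^*\le t'$ and using $c_0(t-s^*)\ge c_0(t-t')$) is the correct unpacking of the paper's ``immediate'' claim, including the reason $r_\eps$ depends only on $\eps,f_0,f_1$.
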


\begin{proof}
(i)  This is identical to the proof of Lemma \ref{L.2.1}(i).

(ii)  This is immediate from the spreading results in \cite{AW2,FM}, $\tht_1''>\tht_0$, and $(u_n)_x\le 0$ (here $r_\eps$ depends on $\eps, f_0,\tht_1''$, and the latter depends only on $f_0,f_1$.)
\end{proof}

The rest of the existence proof carries over from Theorem \ref{T.1.1}(i), with $t\ge -n$ instead of $t\ge 0$, the constant $C$ in \eqref{2.10} only depending on $f_0,f_1$, and $Z_{n,\eps}^-(t+T_\eps)\ge X_n(t)$ (with $T_\eps:=r_\eps c_0^{-1}$) following directly from \eqref{4.3}.  In particular,  $T_\eps$ now only depends on $\eps,f_0,f_1$, hence so does the upper bound on the left-hand side of \eqref{2.4}.  This means that for any fixed $f_0,f_1$, the limits in \eqref{1.3} are uniform in all $K$ and all $f$ satisfying (H) with $x$ replaced by $t$.

We note that again we used neither $\tht>0$ nor $\tht_1>0$ so far, hence existence of fronts extends to mixed bistable-ignition-monostable reactions.

\vskip 4mm
\noindent
{\bf Uniqueness of the front and convergence of typical solutions to it}
\vskip 3mm

This is virtually identical to the same proof in Theorem \ref{T.1.1}(i), but with time-shifts of solutions replaced by space-shifts.  The only changes are the following.  The definitions of $X_u,Y_u,X_w,Y_w,Y_{w,h}$ are adjusted as in \eqref{4.1} and \eqref{4.2}, while those of $Z_u, Z_{u,\eps}^-,Z_{u,\eps}^+,Z_w,Z_{w,\eps}^-,Z_{w,\eps}^+$ stay unchanged.  Each ``$w_t>0$'' is replaced by ``$w_x<0$''.  Claims \eqref{2.26} are replaced by
\beq\lb{4.4}
\text{if $\pm[w(t_0,\cdot-x_0)-u(t_0,\cdot)]\le\delta$, then $\pm[w(t,\cdot-x_0)-u(t,\cdot)]\le\eps$.}
\eeq
In their proofs we can assume $x_0=0$ and use
\beq\lb{4.5}
z_\pm(t,x):= u \left(t,x\mp \frac\eps\Omega \left(1-e^{-\sqrt\zeta(c_0-c_\zeta)(t-t_0)/4} \right) \right) \pm b_\eps e^{-\sqrt\zeta(x-Y_w(t_0)-c_\zeta(t-t_0))/2}
\eeq
(with $Y_u(t_0)$ instead of $Y_w(t_0)$ in the $-$ case), where $\Omega$ is such that  $|u_x|\le \Omega$ for $t\ge 1$.  The time-shift $\tau_w$ is replaced by the space-shift
\[
\xi_w:=\inf\{\xi \in\bbR \,|\, \liminf_{t\to\infty} \inf_{x\in\bbR}[w(t,x-\xi)-u(t,x)]\ge 0\},
\]
and $w(t+\tau_w,x)$ is replaced by $w(t,x-\xi_w)$ in the corresponding argument.  In the last paragraph of the uniqueness proof we use initial conditions $u_n(-n,x)=v(x)$ and obtain for all $t'\in\bbR$, $\eps>0$, and some $\xi_{t',\eps}$,
\[
\sup_{t\ge t'\,\&\,x\in\bbR}|w_1(t,x)-w_2(t,x-\xi_{t',\eps})|<\eps.
\]
 This concludes the proof of uniqueness of the front (up to translation in $x$), and the claim of convergence of typical solutions to its space-shifts uses the same adjustments.

\section{Proof of Theorem \ref{T.1.1a}(ii)} \lb{S7}

This is an immediate corollary of the following result, which is an analog of \cite[Theorem 1.3]{ZlaGenfronts} for time-dependent ignition reactions.   

\begin{theorem} \lb{T.7.1}
Let $f$ be an ignition reaction, satisfying (H) with each $x$ replaced by $t$, with $c_0$ the unique front speed for $f_0$. 
Assume that there are $\zeta<\tfrac{c_0^2}4$ and $\eta>0$ such that 
\beq \lb{7.0}
\inf_{t\in\bbR \,\&\, u\in[\alpha_f(t),\tht_0]} f(t,u)\ge\eta, 
\qquad \text{where }
\alpha_f(t):= \inf\{u\in(0,1) \,|\, f(t,u)\ge \zeta u\}.
\eeq
Then the claims in Theorem \ref{T.1.1}(i) hold for \eqref{1.1a}, 
with uniqueness of the front up to translations in $x$ and with $w_x<0$ instead of $w_t>0$.
\end{theorem}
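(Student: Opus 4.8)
\emph{Strategy.} The plan is to rerun the proof of Theorem \ref{T.1.1a}(i) from Section \ref{S4} almost verbatim, replacing the hypothesis on $f_1$ by the non-vanishing hypothesis \eqref{7.0}; this is the time-dependent counterpart of the way \cite[Theorem 1.3]{ZlaGenfronts} modifies Section \ref{S2}, as outlined in Section \ref{S6}. Thus I would again let $u_n$ solve \eqref{1.1a} with $u_n(-n,\cdot)=v$ (with $\eps_0,v$ from Section \ref{S4}), so that $(u_n)_x\le 0$ and the candidate front $w$, obtained as a locally uniform limit of the space-shifted $\tilde u_n(t,x):=u_n(t,x+\xi_n)$, again satisfies $w_x<0$. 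The only structural change is to measure the ignited region by the smaller quantity $\tilde X_n(t):=\max\{x\,|\,u_n(s,x)\ge\alpha_f(s)\text{ for some }s\in[-n,t]\}$ in place of $X_n$, exactly as $\tilde X_n$ replaced $X_n$ in Section \ref{S6}.

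\emph{The $Y_n$-control is free.} For $x>\tilde X_n(t)$ and $s\in[-n,t]$ one has $u_n(s,x)<\alpha_f(s)$, hence $f(s,u_n(s,x))<\zeta\,u_n(s,x)$ by the very definition of $\alpha_f$ in \eqref{7.0}. This is precisely the inequality that \eqref{2.5} supplied in Section \ref{S4}, so the super-solution argument behind Lemma \ref{L.2.1}(i) applies unchanged on $(t',t)\times(\tilde X_n(t),\infty)$, and part (i) of the crucial lemma (the bound $Y_n(t)-Y_n(t')\le c_\xi(t-t')$, improved to $c_\zeta(t-t')$ once $\tilde X_n(t)\le Y_n(t')$) holds verbatim. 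It therefore remains only to prove the spreading estimate, i.e.\ the analog of \eqref{4.3}: for each $\eps>0$ there is $r_\eps<\infty$, depending on $\eps,f_0,\zeta,\eta,K$, with $\inf_{x\le\tilde X_n(t')+c_0(t-t')-r_\eps}u_n(t,x)\ge 1-\eps$ for all $n$ and $t\ge t'\ge -n$.

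\emph{Main obstacle: spreading from the lowered threshold.} In Section \ref{S4} this step was immediate because the threshold there, $\tht_1''>\tht_0$, let $f\ge f_0$ and the classical spreading of \cite{AW2,FM} take over at once. Here the threshold $\alpha_f(s)$ may lie below $\tht_0$ and, crucially, depends on $s$, so reaching it does not directly trigger $f_0$-spreading; one must first drive $u_n$ up to $\tht_0$ using \eqref{7.0}. This is the genuine difficulty, and it is harder than in the spatial case of Section \ref{S2}, where $(u_n)_t\ge 0$ produced a monotone-in-$t$ limit solving the stationary equation $w''+g(x,w)=0$, whereas now only $(u_n)_x\le 0$ is available. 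I would resolve it as follows. Set $\tht_*:=\sup_t\alpha_f(t)\,(\le\tht_0)$; for $u\in[\tht_*,\tht_0]$ one has $u\ge\alpha_f(t)$ for every $t$, so \eqref{7.0} gives $f(t,u)\ge\eta$ uniformly, after which $f\ge f_0$ yields spreading at speed $c_0$. Hence it suffices to show that once $u_n(s^*,\cdot)\ge\alpha_f(s^*)$ holds on the half-line $(-\infty,\tilde X_n]$ — which it does at the lighting time $s^*$, by $(u_n)_x\le 0$ — then within a bounded time $u_n$ exceeds $\tht_*$ on a bounded interval. I would prove this by the compactness/contradiction scheme of \eqref{2.9}: if it failed, space-time shifts of $u_n$ would converge to an entire, bounded, $x$-monotone solution $\tilde w$ of a limiting ignition equation $\tilde w_t=\tilde w_{xx}+g(t,\tilde w)$ that meets its threshold $\alpha_g$ on a half-line (persistence of this being guaranteed by $g\ge 0$, which keeps $u_n$ a super-solution of the heat equation) yet never exceeds $\tht_*$ there. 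Since $g\ge 0$ and \eqref{7.0} forces $g(t,\cdot)\ge\eta$ on $[\alpha_g(t),\tht_0]$, such a $\tilde w$ cannot persist below $\tht_*$ on a half-line — the parabolic analog of the fact, used in Section \ref{S6}, that a bounded solution of $0=u_{xx}+f(x,u)$ with $f\ge 0$ must be constant — and the contradiction follows.

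\emph{Conclusion.} With the spreading estimate in hand, the remainder of the existence proof (the uniform gap bound \eqref{2.10}, now with $\tilde X_n$, the functions $Z_{n,\eps}^\pm$, and the conclusion \eqref{2.4}) transfers from Section \ref{S4} without change, and as there neither $\tht>0$ nor $\tht_1>0$ is needed, so existence extends to mixed ignition-monostable reactions. Finally, uniqueness of the front up to translation in $x$ and convergence of front-like and spark-like solutions to its space-shifts are, just as in Section \ref{S4}, identical to the arguments of \cite[Sections 3 and 4]{ZlaGenfronts}, since those use the reaction only through $f(t,\cdot)$ being non-increasing on $[0,\tht]$ and on $[1-\tht,1]$ together with the spreading bounds now furnished by \eqref{7.0}. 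This yields all the claims of Theorem \ref{T.1.1}(i) for \eqref{1.1a}, with $w_x<0$ in place of $w_t>0$.
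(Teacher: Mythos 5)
Your reduction to the existence part, the setup with $u_n(-n,\cdot)=v$, and the treatment of $Y_n$ via the subsolution comparison (your part (i), with the running-max $\tilde X_n$) are all fine. The genuine gap is in your ``main obstacle'' paragraph, and it is fatal as written: you claim that once $u_n(s^*,\cdot)\ge\alpha_f(s^*)$ on a half-line, then within a \emph{bounded} time $u_n$ exceeds $\tht_*:=\sup_t\alpha_f(t)$ on a bounded interval near $\tilde X_n$. Hypothesis \eqref{7.0} does not give this, because it constrains $f$ only through the \emph{instantaneous} threshold $\alpha_f(t)$: nothing prevents $\alpha_f(t)$ from rising from its momentary low value at $s^*$ up to $\tht_*$ within a time of order one (this is compatible with $K$-Lipschitz continuity and with \eqref{7.0} at every intermediate time, e.g.\ by letting the ``cliff'' of $f(t,\cdot)$ ramp rightward in $u$) and then staying at $\tht_*$ for an arbitrarily long period. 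During the short window when the threshold is low, the solution near $\tilde X_n$ gains only an increment of order $\eta\times(\text{window length})$, far below $\tht_*$; afterwards the reaction vanishes below $\tht_*$ there, so $u_n$ (now merely a supersolution of the heat equation in that region) stalls until the distant interface where $u_n\approx 1$ arrives. Bounding that distance is exactly the content of \eqref{7.4}, i.e.\ of the theorem itself, so your argument is circular. Correspondingly, your ``parabolic analog'' of the elliptic rigidity fact from Section \ref{S6} is false: a bounded entire solution of $\tilde w_t=\tilde w_{xx}+g(t,\tilde w)$ with $g\ge 0$ can stay below $\tht_*$ on a half-line for all time if $g(t,\cdot)$ vanishes below $\tht_*$ for all $t\ge 1$, say; in your compactness argument the front simply escapes to $-\infty$ in the limit and no contradiction results. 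The rigidity used in Section \ref{S6} is available only because $(u_n)_t\ge 0$ in the spatial case makes the limit \emph{stationary}; there is no time-monotonicity here, which is precisely why the temporal case needs a new idea.

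The paper's proof supplies that idea through measure-theoretic bookkeeping of active times, which is absent from your proposal. It keeps the current-time threshold, $X_n(t):=\max\{x\,|\,u_n(t,x)\ge\alpha_f(t)\}$ (not the running max), introduces the fully-ignited interface $Z_n(t)$ from \eqref{7.2}, and refines part (i) to Lemma \ref{L.7.2}(i): $Y_n(t)-Y_n(t')\le c_\zeta(t-t')+(c_\xi-c_\zeta)\,|\{s\in[t',t]\,|\,X_n(s)>Y_n(t')\}|$. The spreading estimate \eqref{7.5} is stated from $Z_n(t')$ (where $u_n\ge 1-\eps_0>\tht_0$, so \cite{AW2,FM} applies directly), and the heart of the proof is a dichotomy over time windows of fixed length $T$: if the active set $A=\{t\in[t',t'+T]\,|\,X_n(t)>Y_n(t')\}$ has $|A|\ge\delta T$, then an ODE-type comparison (the box problem with $h(t)\ge\eta'$ on $A$, $h\ge 0$ always) shows the \emph{cumulative} reaction input $\ge\eta'\delta T$ drives the solution to $1-\eps_0$, giving $Z_n(t'+T)\ge Y_n(t')-L_T$ as in \eqref{7.6}; if $|A|<\delta T$, then by the refined part (i) $Y_n$ advances at most $c_0T-r_{\eps_0}$ while $Z_n$ advances at least that much by \eqref{7.5}, giving \eqref{7.7}. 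Iterating over windows yields the uniform bound \eqref{7.4}, from which the rest follows as you outline. The moral, which your proposal misses, is that under \eqref{7.0} it is the total time spent above the instantaneous threshold, not any single crossing of it, that produces ignition.
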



As in Section \ref{S6}, the above hypothesis  automatically holds for all pure ignition reactions, with any $\zeta\in(0,\tfrac{c_0^2}4)$ and $\eta$ depending on $\zeta,\tht_1,K,\gamma$ (the latter from Definition \ref{D.1.0}).
This proves Theorem \ref{T.1.1a}(ii), so it remains to prove Theorem \ref{T.7.1}.

In fact, we only need to prove the existence part of the result. 
This is because the remaining claims are then proved identically to Theorem \ref{T.1.1a}(i).  Moreover, the beginning and the end of the proof of the existence part are also identical to that of Theorem \ref{T.1.1a}(i).  There is, however, a difference in Lemma \ref{L.4.1}(ii) because there need not be any $\tht_1''>\tht_0$ such that \eqref{2.5} holds.  This will also require a slightly more refined part (i). 

We use the notation from the beginning of the existence part of Section \ref{S4} (recall, in particular, that $\eps_0$ only depends on $f_0$), but with \eqref{4.1} replaced by
\beq\lb{7.1}
X_n(t):= \max \{x \in\bbR \,|\, u_n(t,x)\ge\alpha_f(t)\}.
\eeq
We will also need
\beq\lb{7.2}
Z_{n}(t)  := \max \{ y\in\bbR \, |\, u_n(t,x)\ge 1-\eps_0 \text{ for all $x< y$} \}.
\eeq
Notice that we have $Z_n(t)\le X_n(t)$ due to $\alpha_f(t)<1-\eps_0$ (see the argument just before the statement of Lemma \ref{L.2.1}).  Here is the relevant version of Lemma \ref{L.2.1}, which also includes the analog of \eqref{2.10}.  

\begin{lemma} \lb{L.7.2}
(i) For any $n$ and $t\ge t'\ge -n$ we have (with $|A|$ the Lebesque measure of $A$)
\beq\lb{7.3}
Y_n(t)-Y_n(t')\le c_\zeta(t-t') + (c_\xi-c_\zeta)|\{s\in[t',t] \,|\, X_n(s)> Y_n(t')\}|.
\eeq

(ii) For every $\eps>0$ there is $r_\eps<\infty$ such that for any $n$ and $t \ge t' \ge -n$ we have
\beq\lb{7.5}
\inf_{x\le Z_n(t')+ c_0(t-t')-r_\eps}  u_n(t,x)  \ge 1-\eps.
\eeq
There is also $C$ such that
\beq\lb{7.4}
 \sup_{n\in\bbN\,\&\,t\ge -n} |Y_n(t)-Z_n(t)|\le C.
\eeq
The  $r_\eps$ only depends on $\eps,f_0$, while $C$ only depends on $f_0,f_1,K,\zeta,\eta$.
\end{lemma}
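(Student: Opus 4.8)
I would prove \eqref{7.3} by a single comparison with an exponential supersolution whose speed is allowed to switch between $c_\zeta$ and $c_\xi$. Fix $t\ge t'\ge -n$, set
\[
\phi(s):=c_\zeta+(c_\xi-c_\zeta)\,\chi_{\{X_n(s)>Y_n(t')\}}(s),\qquad g(s):=\int_{t'}^s\phi(\sigma)\,d\sigma,
\]
so that $g(t)$ is exactly the right-hand side of \eqref{7.3}, and put $\bar w(s,x):=e^{-\sqrt\zeta(x-Y_n(t')-g(s))}$. A direct computation gives $\bar w_s-\bar w_{xx}=(\sqrt\zeta\,\phi(s)-\zeta)\bar w$. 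When $X_n(s)>Y_n(t')$ one has $\sqrt\zeta\,\phi(s)=\xi+\zeta$, so $\bar w_s-\bar w_{xx}=\xi\bar w\ge f(s,\bar w)$ (using $f\le f_1\le\xi\,\mathrm{id}$ on $[0,1]$ and $f(s,\cdot)\equiv0$ above $1$); when $X_n(s)\le Y_n(t')$ one has $\sqrt\zeta\,\phi(s)=2\zeta$, so $\bar w$ is a supersolution wherever $\bar w<\alpha_f(s)$, by the definition of $\alpha_f$. Thus $\bar w$ can fail to be a supersolution only on $R:=\{(s,x):X_n(s)\le Y_n(t'),\ \bar w(s,x)\ge\alpha_f(s)\}$; but there $u_n\le\bar w$ holds directly, since for $x\le Y_n(t')$ we have $\bar w\ge e^{\sqrt\zeta g(s)}\ge1\ge u_n$, while for $x>Y_n(t')\ge X_n(s)$ the monotonicity $(u_n)_x\le0$ gives $u_n(s,x)<\alpha_f(s)\le\bar w(s,x)$. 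Since $u_n(t',\cdot)\le\bar w(t',\cdot)$ by the definition of $Y_n(t')$ and $u_n-\bar w$ has the correct sign as $x\to\pm\infty$, the maximum principle yields $u_n\le\bar w$ on $[t',t]\times\bbR$, whence $Y_n(t)\le Y_n(t')+g(t)$. This is \eqref{7.3}, and it recovers Lemma \ref{L.2.1}(i) in the extreme cases $\phi\equiv c_\xi$ and $\phi\equiv c_\zeta$.

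\textbf{The spreading estimate \eqref{7.5} and the easy half of \eqref{7.4}.} Because $(u_n)_x\le0$ we have $u_n(t',\cdot)\ge(1-\eps_0)\chi_{(-\infty,Z_n(t'))}$, and $1-\eps_0>\tht_0$ together with $f\ge f_0$ lets me compare $u_n$ from below with the solution of $u_t=u_{xx}+f_0(u)$ started from this front-like datum; the classical spreading results \cite{AW2,FM} for $f_0$ (asymptotic speed $c_0$) then give \eqref{7.5} with $r_\eps$ depending only on $\eps,f_0$, as in Lemma \ref{L.4.1}(ii). Taking $\eps=\eps_0$ yields $Z_n(t)\ge Z_n(t')+c_0(t-t')-r_{\eps_0}$. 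One direction of \eqref{7.4} is then immediate: since $u_n(t,Z_n(t))\ge1-\eps_0$ while the envelope gives $u_n(t,Z_n(t))\le e^{-\sqrt\zeta(Z_n(t)-Y_n(t))}$, we get $Z_n(t)-Y_n(t)\le\zeta^{-1/2}\log\tfrac1{1-\eps_0}$, a constant depending only on $f_0,\zeta$.

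\textbf{The reverse bound $Y_n-Z_n\le C$.} This is where \eqref{7.0} enters, and it is the substance of \eqref{7.4}. I would first bound the transition zone, $X_n(s)-Z_n(s)\le D_1(f_0,f_1,K,\zeta,\eta)$: on the band where $u_n(s,\cdot)\in[\alpha_f(s),\tht_0]$ the reaction satisfies $f(s,u_n)\ge\eta$ by \eqref{7.0}, which (by a compactness argument of the type used for \eqref{2.9}, now ruling out a wide intermediate plateau) bounds the width of that band, while $f\ge f_0$ and $f_0>0$ on $(\tht_0,1)$ control the band $[\tht_0,1-\eps_0]$. With $D_1$ in hand I would run the contradiction scheme of \eqref{2.10}: if $Y_n(t)-Z_n(t)$ reaches a large value $C$, choose $t'$ a fixed amount earlier, bound $Y_n(t)-Y_n(t')$ by \eqref{7.3} and $Z_n(t)-Z_n(t')$ from below by \eqref{7.5}, obtaining $Y_n(t)-Z_n(t)-[Y_n(t')-Z_n(t')]\le -(c_0-c_\zeta)(t-t')+(c_\xi-c_\zeta)\mu+r_{\eps_0}$, where $\mu:=|\{s\in[t',t]:X_n(s)>Y_n(t')\}|$. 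Since $c_\zeta<c_0$, a contradiction follows as soon as $\mu$ is a small enough fraction of $t-t'$.

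\textbf{Main obstacle.} Everything thus reduces to controlling $\mu$, i.e.\ to showing that the running envelope $Y_n(t)=\sup\{x+\zeta^{-1/2}\log u_n(s,x):s\le t,\ u_n(s,x)>0\}$ is produced by the bulk (where $u_n\ge\alpha_f$) rather than by a fast weakly-reacting leading tail. If a near-maximizing pair $(s^*,x^*)$ has $u_n(s^*,x^*)\ge\alpha_f(s^*)$, then $x^*\le X_n(s^*)$, and the width bound with \eqref{7.5} give $Y_n(t)\le X_n(s^*)\le Z_n(s^*)+D_1\le Z_n(t)+D_1+r_{\eps_0}$, which is \eqref{7.4} at once. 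The hard part is excluding the opposite case $u_n(s^*,x^*)<\alpha_f(s^*)$: there the envelope is realized inside the region $x>X_n(s)$ where $u_n$ obeys $u_t\le u_{xx}+\zeta u$, and one must show that such tails decay strictly faster than $e^{-\sqrt\zeta x}$ (hence cannot push $Y_n$ beyond $X_n(s^*)+O(1)$), equivalently that the slow speed $c_\zeta$ of part (i) genuinely governs the envelope away from the bulk. Making this tail control quantitative and uniform in $n$, by combining the supersolution of part (i) with the non-degeneracy \eqref{7.0}, is the crux; once it is done $\mu$ is bounded and the contradiction scheme closes, yielding \eqref{7.4} with $C=C(f_0,f_1,K,\zeta,\eta)$.
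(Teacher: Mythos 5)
Your part (i) is correct and is essentially the paper's argument (the paper phrases the variable-speed exponential as $w(t,x)=e^{-\sqrt\zeta(x-Y_n(t')-a(t))}$ solving $w_t=w_{xx}+(\zeta+(\xi-\zeta)\chi_A(t))w$, with $u_n$ a subsolution of that linear PDE on $(t',\infty)\times(Y_n(t'),\infty)$), and your treatment of \eqref{7.5} and of the easy inequality $Z_n-Y_n\le\zeta^{-1/2}\log\tfrac1{1-\eps_0}$ also matches the paper. But the substance of the lemma is the bound $Y_n(t)-Z_n(t)\le C$, and there you do not have a proof: you say yourself that controlling $\mu=|\{s\in[t',t]:X_n(s)>Y_n(t')\}|$ ``is the crux'' and leave it open. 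This is a genuine gap, and moreover the direction you propose to close it cannot work. In steady propagation one expects $Y_n(t')-X_n(t')=O(1)$ and $X_n$ advancing at speed roughly $c_0>0$, so $X_n(s)>Y_n(t')$ for all but an initial $O(1)$ portion of any window; $\mu$ is therefore typically comparable to $t-t'$, not small, and no tail estimate (however quantitative) can make it ``a small enough fraction.'' Your preliminary claim $X_n(s)-Z_n(s)\le D_1$ is also unjustified: the compactness argument behind \eqref{2.9} produces a \emph{steady state} of an elliptic equation, which has no analogue for time-dependent reactions, and in any case such a width bound is a consequence of \eqref{7.4} (via $X_n(s)\le Y_n(s)+\zeta^{-1/2}\log(1/\beta)$), so taking it as an ingredient risks circularity.

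The paper's actual mechanism is a dichotomy over windows of a fixed length $T$, in which \emph{large} $\mu$ is the good case rather than the case to be excluded. Set $\delta:=\tfrac12(c_0-c_\zeta)(c_\xi-c_\zeta)^{-1}$ and take $T\ge 2r_{\eps_0}(c_0-c_\zeta)^{-1}$, so that $c_\zeta T+(c_\xi-c_\zeta)\delta T\le c_0T-r_{\eps_0}$. If $|A|\le\delta T$ on the window $[t',t'+T]$, then \eqref{7.3} and \eqref{7.5} give $Z_n(t'+T)-Z_n(t')\ge Y_n(t'+T)-Y_n(t')$, i.e.\ the gap does not grow (this is \eqref{7.7}, and is the only place your contradiction scheme applies). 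If instead $|A|\ge\delta T$, the hypothesis \eqref{7.0} is used \emph{positively}: for $s\in A$ one has $X_n(s)>Y_n(t')$, hence by $(u_n)_x\le0$ the solution satisfies $u_n(s,\cdot)\ge\alpha_f(s)\ge\beta$ on all of $(-\infty,Y_n(t')]$, where $\beta$ is the smallest positive root of $f_1(\beta)=\zeta\beta$; since the reaction is $\ge\eta'$ whenever the solution there lies in $[\beta,1-\tfrac{\eps_0}2]$ at times of $A$, a comparison with the forced heat equation $v_t=v_{xx}+h(t)$ on $(0,T)\times(0,L_T)$ (Neumann at $0$, Dirichlet at $L_T$, $v(0,\cdot)\equiv\beta$, $h\ge\eta'$ on $A$ while $v(t,0)\le 1-\tfrac{\eps_0}2$) shows that by time $t'+T$ the solution has risen to $1-\eps_0$ up to distance $L_T$ left of $Y_n(t')$, i.e.\ $Z_n(t'+T)\ge Y_n(t')-L_T$ (this is \eqref{7.6}). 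Thus a large $|A|$ forces $Z_n$ to \emph{catch up} with $Y_n(t')$, resetting the gap to at most $L_T+c_\xi T$. Induction over consecutive windows starting at $t=-n$, plus the interpolation $Z_n(t)\ge Z_n(t')-r$, then yields \eqref{7.4}. This time-averaged ignition argument is the idea missing from your proposal, and without it the proof does not close.
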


Lemma \ref{L.7.2}(ii) immediately yields $Z_{n,\eps}^-(t+T_\eps)\ge Z_n(t)$ (with $T_\eps:=r_\eps c_0^{-1}$), and as at the end of the proof of Theorem \ref{T.1.1}(i), we obtain an upper bound on the left-hand side of \eqref{2.4}.  This depends on $\eps,f_0,f_1,K,\zeta,\eta$, so the limits in \eqref{1.3} depend on $f_0,f_1,K,\zeta,\eta$.  Therefore, to finish the proof of Theorem \ref{T.7.1}, it  remains to prove the lemma.

\begin{proof}
(i)  Let $A:=\{s\ge t'\,|\, X_n(s)> Y_n(t')\}$ and let $a(t):=c_\zeta(t-t')+ (c_\xi-c_\zeta)|A\cap[t',t]|$ for any $t\ge t'$.  Then $w(t,x):=e^{-\sqrt\zeta(x-Y_n(t')-a(t))}$ satisfies $w_t=w_{xx}+(\zeta+(\xi-\zeta)\chi_A(t)) w$, while $u_n$ is a sub-solution of this PDE on $(t',\infty)\times(Y_n(t'),\infty)$ due to the definition of $X_n$ and $\alpha_f$.  Since also $w\ge 1 >u_n$ on $(t',\infty)\times(-\infty,Y_n(t')]$, we have $w\ge u_n$ on $[t',\infty)\times\bbR$, and the result follows.  

(ii)  The first claim follows as in Lemma \ref{L.4.1}(ii), so we only need to prove \eqref{7.4} (and only the inequality $Y_n(t)-Z_n(t)\le C$ because the opposite one is obvious, with $C=\zeta^{-1}|\ln(1-\eps_0)|$).  

Let $\beta>0$ be the smallest positive number such that $f_1(\beta)=\zeta\beta$, so that $\alpha_f(t)\ge \beta$ for all $f$ from (H).  Let also $\eta'>0$ be such that any $K$-Lipschitz function greater than $\eta$ on $[0,\tht_0]$ and greater than $f_0$ on $[\tht_0,1]$ is greater than $\eta'$ on $[0,1-\tfrac{\eps_0}2]$.  And let $\delta:=\tfrac 12(c_0-c_\zeta)(c_\xi-c_\zeta)^{-1}>0$.

We first claim that for any large enough $T<\infty$ there is $L_T<\infty$ (depending also on $\eps_0,\eta',\delta$) such that the following holds.  If $A\subseteq(0,T)$ satisfies $|A|\ge\delta T$ and $v_t=v_{xx}+h(t)$ on $(0,T)\times(0,L_T)$ with initial condition $v(0,\cdot)\equiv\beta$, boundary conditions $v_x(\cdot,0)\equiv v(\cdot,L_T)\equiv 0$, and  $h(t)\ge 0$ such that $h(t)\ge \eta'$ for each $t\in A$ for which $v(t,0)\le 1-\tfrac{\eps_0}2$, then $v(T',0)\ge 1-\eps_0$ for each $T'\in[\sup A,T]$.  Indeed, this follows for each $T\ge (1-\tfrac{\eps_0}2-\beta)(\eta'\delta)^{-1}$ from parabolic regularity and the fact that if $L_T$ is replaced by $\infty$, then $v$ is only a function of $t$ and we obviously have $v(T',\cdot)\ge 1-\tfrac{\eps_0}2$ for each $T'\in[\sup A,T]$.

This, $(u_n)_x\le 0$, and the comparison principle now yield for each large enough $T$ that if $t'\ge -n$ and the set $A:=\{t\in[t',t'+T] \,|\, X_n(t)> Y_n(t')\}$ satisfies $|A|\ge \delta T$, then
\beq\lb{7.6}
Z_n(t'+T)\ge X_n(\inf A)-L_T\ge Y_n(t')-L_T.
\eeq
Let us now take $T\ge 2r_{\eps_0}(c_0-c_\zeta)^{-1}$, so that $c_\zeta T+ (c_\xi-c_\zeta)\delta T\le c_0 T-r_{\eps_0}$ (then $T$ and $L_T$ depend only on $f_0,f_1,K,\zeta,\eta$).  We  find using \eqref{7.3} and \eqref{7.5} that if $|A|\le\delta T$, then
\beq\lb{7.7}
Z_n(t'+T)-Z_n(t')\ge Y_n(t'+T)-Y_n(t').
\eeq
From \eqref{7.6}, \eqref{7.7}, and $Y_n(t'+T)\le Y_n(t')+c_\xi T$ (which is due to (i)) we obtain for $j=0,1,...$
\beq\lb{7.8}
Y_n(-n+jT)- Z_n(-n+jT)\le \max\{r,L_T+c_\xi T\},
\eeq
where $r>0$ is such that $v\equiv 1-\eps_0$ on $(-\infty,-r]$ (so that $Y_n(-n)- Z_n(-n)\le r$ for each $n$).  We also have $Z_n(t)\ge Z_n(t')-r$ for $t\ge t'$ because $(u_n)_x\le 0$ and $v$ is a sub-solution of \eqref{1.1a}.  This, \eqref{7.3}, and \eqref{7.8} now yield
\[
Y_n(t)-Z_n(t) \le L_T+2c_\xi T+2r
\]
for all $t\ge -n$, finishing the proof.
\end{proof}

Note that again we used neither $\tht>0$ nor $\tht_1>0$ in the proof of the existence part of Theorem \ref{T.7.1}, so that result extends to mixed ignition-monostable reactions.

\section{Proof of Theorem \ref{T.1.1a}(iii)} \lb{S5}

We will use the following lemma, in which we let 
\[
\begin{split}
g_0(u):= &
\begin{cases}
0 & u\in[0,\frac 12], 
\\ (u-\frac 12)(1-u)(u-\frac 23) & u\in(\frac 12,1],
\end{cases}
\\
g_1(u):= &
\begin{cases}
0 & u\in[0,\frac 1{11}] \cup [\frac 12,\frac 23],
\\ K \dist(u,\{\frac 1{11},\frac 12\}) & u\in(\frac 1{11},\frac 12)
\\ (u-\frac 12)(1-u)(u-\frac 23) & u\in(\frac 23,1],
\end{cases}
\end{split}
\]
with some $K\ge 0$ (which will need to be large in (iv) below).

\begin{lemma} \lb{L.5.1}
There are $M>0$ and $a\in (0,\tfrac 1{16})$ such that the following hold.

(i) If $u_t=u_{xx}+g_0(u)$ on $(0,1)\times\bbR$ and $u(0,\cdot)\le \chi_{(-\infty,0]}+\tfrac 58 \chi_{(0,\infty)}$, then 
\beq\lb{5.1}
u(1,\cdot)\le \chi_{(-\infty,M]}+(\tfrac 58 -2a) \chi_{(M,\infty)}.
\eeq

(ii) If $u_t=u_{xx}+g_1(u)$ on $(1,4)\times\bbR$ and \eqref{5.1} holds, then for all $K\ge 0$,
\[
u(4,\cdot)\le \chi_{(-\infty,2M]}+(\tfrac 58 -a) \chi_{(2M,\infty)}.
\]

(iii) If $u_t=u_{xx}+g_0(u)$ on $(-1,2)\times\bbR$ and $u(-1,\cdot)\ge \tfrac 4{11}\chi_{(-M,M)}$, then 
\[
\min\{u(0,\cdot),u(2,\cdot)\}\ge \tfrac 3{11}\chi_{(-1,1)}.
\]

(iv) If $u_t=u_{xx}+g_1(u)$ on $(2,3)\times\bbR$ and $u(2,\cdot)\ge \tfrac 2{11}\chi_{(-1,1)}$, then for all large enough $K$,
\[
u(3,\cdot)\ge \tfrac 5{11}\chi_{(-4M,4M)}.
\]
\end{lemma}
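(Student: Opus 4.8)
The plan is to prove all four parts by the comparison principle, fixing $a$ and $M$ (both independent of $K$) first and then choosing the threshold for $K$ in (iv). Everything rests on $g_0\equiv 0$ on $[0,\tfrac12]$ and on $g_1\ge 0$ with $g_1\equiv 0$ on $[\tfrac12,\tfrac23]$, the $K$-dependent part of $g_1$ living only in $[\tfrac1{11},\tfrac12]$; I extend both reactions to be Lipschitz and $\le 0$ for $u>1$ so that $0\le u\le 1$ is preserved. For (i), since $u\le 1$ I only need $u(1,x)\le\tfrac58-2a$ for $x>M$. Let $\hat U$ solve $\dot{\hat U}=g_0(\hat U)$ with $\hat U(0)=\tfrac58$; as $g_0(\tfrac58)<0$ and $\tfrac12$ is an equilibrium, $\hat U$ decreases and $a':=\tfrac58-\hat U(1)\in(0,\tfrac18)$. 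I take $\bar u(t,x):=\hat U(t)+e^{-x+\mu t}$ with $\mu=1+\kappa_0$ ($\kappa_0$ a Lipschitz constant of $g_0$); the bound $g_0(\bar u)\le g_0(\hat U)+\kappa_0 e^{-x+\mu t}$ makes $\bar u$ a supersolution with $\bar u(0,\cdot)\ge\chi_{(-\infty,0]}+\tfrac58\chi_{(0,\infty)}\ge u(0,\cdot)$, so $u\le\bar u$. Then $\bar u(1,x)=\tfrac58-a'+e^{\mu-x}\le\tfrac58-\tfrac{a'}2$ once $M\ge\mu+\ln(2/a')$, and I set $a:=a'/4\in(0,\tfrac1{16})$ so that $2a=\tfrac{a'}2$.

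For (ii) the goal is uniformity in $K$. By comparison it suffices to treat the solution $w$ whose datum $w(1,\cdot)$ is the right side of \eqref{5.1}. The crucial observation is that $g_1\ge 0$ forces $w\ge h$, where $h$ solves the heat equation with the same datum, whence $w\ge\tfrac58-2a>\tfrac12$ everywhere. Thus $w$ never enters the $K$-dependent region $[\tfrac1{11},\tfrac12]$, and on its range $g_1$ coincides with the fixed, $K$-independent function $\tilde g$ (zero on $[\tfrac12,\tfrac23]$, the cubic on $[\tfrac23,1]$). I then repeat the barrier of (i): with $\bar w(t,x):=(\tfrac58-2a)+e^{-(x-M)+\mu(t-1)}$ and $\mu=1+\kappa_1$ ($\kappa_1$ a Lipschitz constant of $\tilde g$, independent of $K$), the identity $\tilde g(\tfrac58-2a)=0$ makes $\bar w$ a supersolution dominating $w(1,\cdot)$, giving $w(4,x)\le(\tfrac58-2a)+e^{3\mu}e^{-(x-M)}\le\tfrac58-a$ for $x>2M$ as soon as $M\ge 3\mu-\ln a$. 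Since $\mu$ is $K$-independent, so is this lower bound on $M$.

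Part (iii) is pure diffusion: as $u(-1,\cdot)\ge\tfrac4{11}\chi_{(-M,M)}$ and $g_0\equiv 0$ on $[0,\tfrac12]$, the heat solution $\underline u$ with this datum stays below $\tfrac4{11}<\tfrac12$, hence solves the $g_0$-equation and lies under $u$; the heat kernel gives $\underline u(0,x),\underline u(2,x)\to\tfrac4{11}$ as $M\to\infty$ uniformly for $|x|\le 1$, so both exceed $\tfrac3{11}$ there for $M$ large. I fix $M$ as the largest of the three lower bounds coming from (i)--(iii). For (iv), $g_1\ge K\phi$ with $\phi$ the unit tent supported on $[\tfrac1{11},\tfrac12]$; comparing with the solution $\underline{\underline u}$ of $\partial_t=\partial_{xx}+K\phi$ (which stays in $[0,\tfrac12]$) reduces matters to spreading $\underline{\underline u}$. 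The parabolic rescaling $v(\tau,\eta):=\underline{\underline u}(2+\tau/K,\eta/\sqrt K)$ turns this into the fixed ignition equation $v_\tau=v_{\eta\eta}+\phi(v)$ with datum $\tfrac2{11}\chi_{(-\sqrt K,\sqrt K)}$, the target becoming $v(K,\eta)\ge\tfrac5{11}$ for $|\eta|\le 4M\sqrt K$. Since $\phi$ has ignition threshold $\tfrac1{11}<\tfrac2{11}$ and stable state $\tfrac12$, and its support width $2\sqrt K\to\infty$, the spreading results \cite{AW2,FM} give a speed $c_*>0$ with $v(\tau,\eta)\ge\tfrac12-\tfrac1{22}=\tfrac5{11}$ for $|\eta|\le(c_*-\delta)\tau$ once $\tau$ is large; at $\tau=K$ this covers $|\eta|\le 4M\sqrt K$ as soon as $(c_*-\delta)K\ge 4M\sqrt K$, i.e. for all large $K$.

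The main obstacle is securing the two uniformity statements, not the barriers themselves. In (ii) it is the observation that $g_1\ge 0$ pins $w$ above $\tfrac12$, erasing all $K$-dependence and allowing $M$ to be chosen before $K$; without it the naive exponential barrier would force $M$ to grow like $K$. In (iv) it is that a \emph{fixed} unit time must yield spreading over the fixed distance $4M$ for large $K$: the rescaling converts this into spreading over the long time $\tau=K$ for the fixed reaction $\phi$, while simultaneously inflating the initial support to width $2\sqrt K$ (which supplies the needed ignition), so that linear-in-time spreading beats $4M\sqrt K$. Once these two points are in place, verifying the supersolution/subsolution inequalities, the heat-kernel limits, and the inclusions $\tfrac58-2a,\tfrac58-a\in(\tfrac12,\tfrac23)$ is routine.
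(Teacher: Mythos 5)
Your proposal is correct and takes essentially the same approach as the paper: the paper's own proof consists precisely of invoking the four structural facts you exploit --- $g_0(\tfrac 58)<0$ for (i), $g_1(\tfrac 58-2a)=0$ for (ii), $g_0\equiv 0$ on $[0,\tfrac 12]$ for (iii), and $\tfrac 1{11}<\tfrac 2{11}<\tfrac 5{11}<\tfrac 12$ for (iv) --- declared ``obvious,'' with the same ordering of choices (fix $a$ from (i), then $M$ from (i)--(iii), then $K$ large in (iv)). Your explicit exponential barriers, heat-kernel comparison, and the parabolic rescaling reducing (iv) to fixed-reaction spreading via \cite{AW2,FM} simply supply the details the paper leaves to the reader.
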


\begin{proof}
(i)  This is obvious for any small enough $a>0$ and any large enough $M$ from $g_0(\tfrac 58)<0$.  We fix this $a$, while $M$ may still be increased to satisfy (ii,iii).

(ii)  This is obvious for the above $a$ and any large enough $M$ from $g_1(\tfrac 58-2a)=0$.

(iii)  This is obvious for any large enough $M$ from $g_0=0$ on $[0, \tfrac 12]$.

(iv)  Fixing $M$ from (i,ii,iii), this follows for any large enough $K$ from $\tfrac 1{11}<\tfrac 2{11} < \tfrac 5{11}<\tfrac 1{2}$.
\end{proof}

To prove Theorem \ref{T.1.1a}(iii), we fix $a,M,K$ from the lemma and pick $\delta\in(0,\tfrac a4)$ (then 
$e^{3\delta}< 1+a$) and any pure bistable $(x,t)$-independent  $f_0\le f_1$ as in (H)  such that $f_0\le g_0$ and
\beq\lb{5.2}
| f_j(u)-g_j(u)|\le \delta u
\eeq
for $u\in[0,1]$ and $j=0,1$ (note that such $f_0,f_1$ exist because $g_0\le g_1$ and  $\int_0^1 g_0(u)du>0$).
We also let $f(t,u)$ be a pure bistable reaction satisfying (H) with these $f_0,f_1$ and all $x$ replaced by $t$, which is time-periodic with period 4 and also satisfies
\beq\lb{5.3}
f(t,\cdot)=f_0 \text{ for $t\in [0,1]$} \qquad\text{and}\qquad f(t,\cdot)=f_1 \text{ for $t\in [2,3]$}.
\eeq

If now $u$ solves \eqref{1.1a} with $u(t',\cdot)\le \chi_{(-\infty,x']}+\tfrac 58 \chi_{(x',\infty)}$ for some $t'\in 4\bbZ$ and $x'\in\bbR$, then Lemma \ref{L.5.1}(i) and $f_0\le g_0$ show
\[
u(t'+1,\cdot)\le \chi_{(-\infty,x'+M]}+(\tfrac 58 -2a) \chi_{(x'+M,\infty)} .
\]
Then Lemma \ref{L.5.1}(ii), \eqref{5.2}, and $e^{3\delta}< 1+a$ show
\[
u(t'+4,\cdot)\le \min \left\{1,e^{3\delta} \left[ \chi_{(-\infty,x'+2M]}+(\tfrac 58 -a) \chi_{(x'+2M,\infty)} \right] \right\} \le \chi_{(-\infty,x'+2M]}+\tfrac 58 \chi_{(x'+2M,\infty)}.
\]
Iterating this, we obtain for $j=1,2,\dots$,
\[
u(t'+4j,\cdot)\le  \chi_{(-\infty,x'+2jM]}+\tfrac 58 \chi_{(x'+2jM,\infty)}.
\]

Since $u(0,\cdot)\le \chi_{(-\infty,A]}+\tfrac 58 \chi_{(A,\infty)}$ for some $A\in\bbR$ whenever $\limsup_{x\to\infty}u(0,x)<\tfrac 58$, we obtain for any transition front $u$ for \eqref{1.1a} (we only need to consider the right-moving ones because $f$ is $x$-independent) and some $A\in\bbR$,
\beq\lb{5.4}
u(4j,\cdot)\le  \chi_{(-\infty,A+2jM]}+\tfrac 58 \chi_{(A+2jM,\infty)}
\eeq
for $j=1,2,\dots$, an estimate analogous to \eqref{3.4a}.

Similarly, we can use Lemma \ref{L.5.1}(iii,iv), \eqref{5.2}, and $e^{-3\delta}> 1-a>\tfrac {4}{5}$ to show that if $u$ solves \eqref{1.1a} with $u(t'-1,\cdot)\ge \tfrac 4{11} \chi_{(x'-M,x'+M)}$ for some $t'\in 4\bbZ$ and $x'\in\bbR$, then
\[
u(t'+3,\cdot)\ge \tfrac 4{11}\chi_{(x'-4M,x'+4M)}.
\]
Iteration then again yields for $j=1,2,\dots$,
\[
u(t'-1+4j,\cdot)\ge \tfrac 4{11}\chi_{(x'-(3j+1)M,x'+(3j+1)M)}, 
\]
and one more application of Lemma \ref{L.5.1}(iii), \eqref{5.2}, and $e^{-3\delta}>\tfrac {2}{3}$ yields for $j=1,2,\dots$,
\[
u(t'+4j,\cdot)\ge \tfrac 2{11}\chi_{(x'-3jM,x'+3jM)}, 
\]
Hence for any transition front $u$ and some $B\in\bbR$ we obtain
\beq\lb{5.5}
u(4j,\cdot)\ge \tfrac 2{11}\chi_{(B-3jM,B+3jM)}
\eeq
for $j=1,2,\dots$, an estimate analogous to \eqref{3.3}.

This and \eqref{5.4} now show for $\eps_0:= \tfrac 2{11}$ and  $j=1,2,\dots$ that $u(4j,\cdot)$ takes values within $[\eps_0,1-\eps_0]$ on some interval of length  $Mj+B-A$.
Since  $M>0$, it follows that
\eqref{1.1a} with this pure bistable $f$ does not have any transition fronts (connecting 0 and 1).  

Similarly to Section \ref{S3}, the second claim is proved identically.  

\section{Proof of Corollary \ref{C.1.5}} \lb{S8}

(i)  This is immediate from uniqueness of the front and the fact that its single space/time period translate is also a transition front.  We note that if $f_1'(0)<0$ and $f_0'(1)<0$, then the result also follows from our existence of transition fronts, $c_0>0$, and \cite[Theorem 1.6]{DHZ}.

(ii)  The stationary ergodic assumption on $f$ means that there is a probability space $(\Omega,\mathcal{F},\bbP)$,  $f:\Omega\to L^\infty_{\rm loc}(\bbR\times[0,1])$ is measurable and satisfies the required hypotheses uniformly in $\omega\in\Omega$, and there is a group $\{\pi_k\}_{k\in \bbZ}$ of measure preserving transformations acting ergodically on $\Omega$ such that either $f({\pi_k\omega};x,u)=f(\omega;x-kp,u)$ or $f({\pi_k\omega};t,u)=f(\omega;t-kp,u)$ for some $p>0$.

The proof of this part is similar to \cite[Corollary 1.7]{ZlaGenfronts}.
Let us start with the space-inhomogeneous reaction case.  Let $v$ be the function from Section \ref{S2}, and let $u_m$ solve \eqref{1.1} with initial condition $u_m(0,x):=v(x-mp)$  (so that $(u_m)_t>0$). For integers $n\ge m$ define
\[
\tau_{m,n}(\omega) := \inf \big\{ t\ge 0 \,\big|\, u_m(t,x)\ge v(x-np) \text{ for all $x\in \bbR$} \big\}.
\]
As in \cite{ZlaGenfronts}, the subadditive ergodic theorem \cite{Kingman, Liggett} applies to $\tau_{m,n}$ and yields finite positive deterministic limits
\[
\tau = \lim_{n\to\infty} \frac{\tau_{0,n}(\omega)} n = \lim_{n\to \infty} \frac{\tau_{-n,0}(\omega)} n 
\]
for almost all $\omega\in\Omega$. 
Uniform convergence (in $m$ and $\omega$) of the solution $u_m$ to the front $w_{\omega}$ in $L^\infty$  (see Remark 3 after Theorem \ref{T.1.1}) then shows  that $c:=\tfrac p\tau$ is the asymptotic speed of $w_{\omega}$ as $|t|\to\infty$ for almost all $\omega\in\Omega$.

In the time-inhomogeneous reaction case we instead let $u_m$ solve \eqref{1.1a} with initial condition $u_m(mp,x):=v(x)$, and for integers $n\ge m$ define
\[
\xi_{m,n}(\omega) := \sup \big\{ y\in\bbR \,\big|\, u_m(np,x)\ge v(x-y) \text{ for all $x\in \bbR$} \big\}.
\]
This time the subadditive ergodic theorem  yields finite positive deterministic limits
\[
\xi = \lim_{n\to\infty} \frac{\xi_{0,n}(\omega)} n = \lim_{n\to \infty} \frac{\xi_{-n,0}(\omega)} n 
\]
for almost all $\omega\in\Omega$, and it again follows that $c:=p\xi$ is the asymptotic speed of $w_{\omega}$ as $|t|\to\infty$ for almost all $\omega\in\Omega$.



\end{document}